\journal{}
\newtheorem{theorem}{Theorem}[section]
\newtheorem{lemma}[theorem]{Lemma}
\newtheorem{proposition}[theorem]{Proposition}
\newtheorem{definition}[theorem]{Definition}
\newtheorem{example}[theorem]{Example}
\newtheorem{remark}[theorem]{Remark}
\newcommand{\rnum}{\mathbb{R}}
\newcommand{\knum}{\mathbb{K}}
\newcommand{\grobner}{Gr\"{o}bner }
\newcommand{\p}[1]{\bm{#1}}
\newcommand{\varxn}{X}
\newcommand{\pointxn}{\bar{\p{x}}}
\newcommand{\rxn}{\mathbb{R}[\varxn]}
\newcommand{\kx}{\mathbb{K}[X]}
\newcommand{\kxn}{\mathbb{K}[\varxn]}
\newcommand{\rc}{\texttt{RealCertify}\xspace}
\newcommand{\maple}{\texttt{Maple}\xspace}
\newcommand{\ignore}[1]{}
\DeclareMathOperator{\qm}{QM}
\DeclareMathOperator{\quadmod}{QM}
\DeclareMathOperator{\semialgebraic}{S}
\DeclareMathOperator{\algAverkov}{\sf Averkov}
\DeclareMathOperator{\algExtAverkov}{\sf ExtAverkov}
\DeclareMathOperator{\algLasserre}{\sf ExtLasserre}
\DeclareMathOperator{\algPutinar}{\sf Certificate}
\DeclareMathOperator{\lc}{lc}
\DeclarePairedDelimiter{\set}{\lbrace}{\rbrace}
\begin{document}

\begin{frontmatter}



\title{Computing Certificates of Strictly Positive Polynomials in Archimedean Quadratic Modules} 


\author[1]{Weifeng Shang\corref{cor1}}
\ead{weifengshang@buaa.edu.cn}

\author[2]{Jose Abel Castellanos Joo}
\ead{jose.castellanosjoo@cs.unm.edu}

\author[1,3]{Chenqi Mou}
\ead{chenqi.mou@buaa.edu.cn}

\author[2]{Deepak Kapur}
\ead{kapur@cs.unm.edu}

\cortext[cor1]{Corresponding author}

\affiliation[1]{organization={School of Mathematical Sciences, Beihang University},
    addressline={Beijing 100191},
    country={China}}
\affiliation[2]{organization={Computer Science Department, University of New Mexico},
    addressline={1155 University Blvd SE},
    city={Albuquerque},
    country={United States of America}}
\affiliation[3]{organization={Sino-French Laboratory for Mathematics, Hangzhou International Innovation Institute of Beihang University},
    addressline={Hangzhou 311115},
    country={China}}

\date{}

\begin{abstract}

This paper addresses the problem of computing
  certificates that witness the membership of strictly positive polynomials in
  multivariate Archimedean quadratic modules. New algorithms for computing such certificates are presented based on (i)
  Averkov's method for generating a strictly positive polynomial for which a
  certificate can be more easily computed, and (ii) Lasserre's method for generating a
  certificate by successively approximating a non-negative polynomial by sums of
  squares. 

  A constructive method based on Averkov's result is given by
  providing details about the parameters; further, his result is extended to
  work on arbitrary subsets, in particular the whole Euclidean space $\rnum^n$,
  producing globally strictly positive polynomials. Then Lasserre's method is extended for monogenic Archimedean quadratic modules and integrated with the extended Averkov construction above to generate
  certificates. 
  
  All the presented methods have been implemented and their effectiveness
  is illustrated. Examples are given on which the existing software package
  \rc appears to struggle, whereas the proposed method succeeds in
  generating certificates. Several situations are identified where an
  Archimedean polynomial does not have to be explicitly included in a set of
  generators of an Archimedean quadratic module. Unlike other approaches for
  addressing the problem of computing certificates, the methods presented are easier to understand as well as to implement.
\end{abstract}

\begin{keyword}
Quadratic module \sep certificate \sep sum of squares \sep Positivstellensatz


\end{keyword}

\end{frontmatter}



\section{Introduction}

Consider a finite polynomial set $G = \{g_1, \ldots, g_s\}$ in the multivariate
polynomial ring $\rxn$. The \emph{quadratic module} generated by $G$ is
$ \qm(G) = \{ \sigma_0 +\sum_{i=1}^{s}\sigma_i \cdot g_i \,|\, \sigma_i $
  is a sum of squares in  $\rxn \mbox{ for } i \!=\!0, \ldots, s\} $
\citet{M08p, P11p}. Quadratic modules are fundamental objects in real algebraic
geometry and are heavily related to \citeauthor{H88u}'s (\citeyear{H88u}) 17th problem and
Positivstellensatz. This paper addresses the problem of computing {\em
  certificates} that witness the membership of polynomials in quadratic modules
in $\rxn$. The certificates here are also referred to as sum-of-squares
representations in the literature. In particular, we present new results and
constructive methods to compute the sum-of-squares multipliers, or certificates,
of strictly positive polynomials in Archimedean finitely generated quadratic
modules.

Several contributions in the field identify conditions whenever representations
in preorderings\footnote{A preordering is a quadratic module closed under
  multiplication.} and quadratic modules exist. Noteworthy are the seminal
results by \citeauthor{S91t}'s (\citeyear{S91t}) and \citeauthor{P93p}'s (\citeyear{P93p}) Positivstellensatz for strictly positive polynomials over compact semialgebraic sets.
However, the above-mentioned results about membership problems do not produce
certificates as their proofs are non-constructive. In the case of Hilbert's 17th
problem, as an example, for a given non-negative polynomial $p$ it is nontrivial
to generate a certificate consisting of rational functions $\frac{a_i}{b_i}$'s
such that $p = \sum_i \left({\frac{a_i}{b_i}}\right)^2$. A constructive solution to the certificate problem for Archimedean preorderings is discussed in \citet{M02a,
  M04s} by including an Archimedean polynomial
\footnote{Given an Archimedean quadratic module $Q$, an \emph{Archimedean
    polynomial} of $Q$ is a polynomial of the form $N - \sum_{i=1}^{n}{x_i^2}$
  with $N \in \mathbb{N}$ that belongs to $Q$.} in the set of generators; an
algorithm is described using \grobner basis techniques to lift positivity
conditions to a simplex and using P\'olya bounds to compute
certificates in \citet{PR01a}.

{\bf Contributions}. This paper presents new results about computing certificates of strictly positive polynomials in Archimedean quadratic modules. The results build upon (i) \citeauthor{A13C}'s (\citeyear{A13C}) method  for generating a strictly positive polynomial over a given compact subset $B$ of $\rnum^n$ for which a membership certificate can be more easily computed than the input polynomial whose certificate is being sought, and (ii) \citeauthor{L07a}'s (\citeyear{L07a}) method for generating a certificate by successively approximating a non-negative polynomial by sums of squares.

A constructive method based on \citeauthor{A13C}'s \citeyear{A13C} result is given
by providing details about the parameters left out there. Further, his result is
extended to work on any subsets, in particular, on $\rnum^n$ (compare \th\ref{lemma:d} and \th\ref{lemma:c} for details). These are discussed in Section~\ref{sec:averkov}. Then in Section~\ref{sec:lasserre} we extend Lasserre's method to reduce any globally strictly positive polynomial by a single polynomial to a sum of squares. These two extensions of Averkov's construction and Lasserre's method are proved in a constructive manner, and they serve as the theoretical and algorithmic foundations of our method for solving the certificate problem. As a by-product, using these two extensions we give a new constructive proof of Putinar's criterion for a quadratic module to be Archimedean (Theorem~\ref{thm:archimedean}) in Section~\ref{sec:criterion}.

To apply the extended Averkov's construction to the certificate problem, there
are two technical problems to solve: construction of a polynomial in the
quadratic module with a bounded non-negative set and a polynomial with a global
lower bound. The former problem is easy when some generator of the quadratic
module has a bounded non-negative set, otherwise we explicitly show how to
construct such a polynomial in the univariate case and use an Archimedean
polynomial in the multivariate case. Unlike the methods in the literature which
depend on the inclusion of Archimedean polynomials in the generating sets, 
we identify the situations where we can avoid inclusion of Archimedean
polynomials and thus in these situations the certificates computed by our method
are with respect to the original generators of the quadratic modules without the
Archimedean polynomials.

After providing solutions to these two technical problems, in Section~\ref{sec:cert} we propose the algorithm (Algorithm~\ref{alg:overall}) for computing certificates of strictly positive polynomials in Archimedean quadratic modules by using the extended Averkov's construction and Lasserre's method. Unlike other approaches for addressing the problem of computing certificates, the proposed algorithm is easier to understand as well as implement.

All the methods discussed in the paper have been implemented and their effectiveness is illustrated.  Examples are given on which \rc, a software package for solving the certificate problem based on semidefinite programming in \citet{M18r}, appears to struggle, whereas the proposed method succeeds in generating certificates. Several examples are given where an Archimedean polynomial does not have to be explicitly included in an input set of generators of an Archimedean quadratic module. 

{\bf Related work}. Among the literature most related to the topic of the paper, the following two publications stand out. In \citet{M18o}, the authors use the extended Gram matrix method to obtain an approximate certificate of the input polynomial in a quadratic module and a refinement step to compute sums of squares to represent the difference of the input polynomial and the approximate certificate (or the ``error term''). The method requires the polynomial set of generators to include an Archimedean polynomial as part of the assumptions. The authors also discuss time complexity for the termination of their algorithm and bounds of the degrees and bit size complexity of the coefficients in the certificates obtained; bounds of the degrees of the certificates rely on the complexity analysis discussed in \citet{N07o}. Their implementation, \rc \citet{M18r}, relies on numerical methods, making them ineffective in certain situations. The performance of the approach proposed in the paper is compared with that of \rc later in Section~\ref{sec:exp}.

More recently, \cite{BM23o} provides degree bounds on the sum-of-square multipliers in the certificate problem for a strictly positive polynomial $f$ in an Archimedean quadratic module $\qm(G)$. This is done in two steps. First, a new polynomial $h \in \qm(G)$ is constructed such that $f - h$ is strictly positive over the hypercube $\left[-1,1\right]^{n}$ by using a modified construction to \citeauthor{A13C}'s (\citeyear{A13C}, [Lemma~3.3]) result. Secondly, the construction in \citet{MS22a} is used to obtain a certificate of $f - h$ in the preordering generated by $1 - \sum_{i=1}^{n}{x_i^2}$ that equals $\qm(1 - \sum_{i=1}^{n}{x_i^2})$. This step relies on the Jackson kernel method, which seems highly complex and also difficult to algorithmize.  Rearranging the previous results, the authors find a certificate of $f$ in $\quadmod(G \cup \{1 - \sum_{i=1}^{n}{x_i^{2}}\})$. This approach also relies on adding an Archimedean polynomial to the generators. The authors do not report any implementation of their approach.

Because of their dependence on requiring an Archimedean polynomial as one of the generators, both of these papers are unable to compute certificates in case the generators of an Archimedean quadratic module do not explicitly include an Archimedean polynomial.

\section{Preliminaries}

Throughout this paper, we use $\rnum$ to denote the field of real numbers, and $\knum$ is a computable subfield of $\rnum$. Here, a computable field is a field for which there is an algorithm for computation of signs of elements of $\knum$ \citet{L70c, S08r}. When talking about effective algorithms in this paper, we restrict ourselves to polynomials over $\knum$ so that all the construction in the algorithms is feasible. For example, the field $\mathbb{Q}$ of rational numbers, a computable subfield of $\rnum$, is frequently used as the base field in the case of computation in real algebraic geometry.

\subsection{\texorpdfstring{Quadratic modules}{Quadratic modules}}

Consider the multivariate polynomial ring $\rxn = \rnum[x_1, \ldots, x_n]$. We denote $\varxn = (x_1,\ldots,x_n)$ and
$\|\varxn\| = \sqrt{x_1^2 + \cdots + x_n^2}$. For any
$\p{\alpha} = (a_1,\ldots, $ $a_n) \in \mathbb{N}^n$, where 
$\mathbb{N}$ is the set of natural numbers, denote
$ \varxn^{\p{\alpha}} = x_1^{a_1} \cdots x_n^{a_n} $. We also use $\pointxn$ to denote a point in $\mathbb{R}^n$.

A polynomial in $\rxn$ is called a \emph{sum of squares} if it can be expressed
as a sum of squares of polynomials in $\rxn$. We denote the set of all sums of
squares in $\rxn$ by $\sum \rxn^2$. Any sum of squares in $\rxn$ is a polynomial
of even total degree and is non-negative over $\rnum^n$.

\begin{definition}\rm 
  A polynomial set $ M \subseteq \rxn$ is called a \emph{quadratic module} in
  $\rxn$ if $M$ is closed under addition, $1 \in M$, and for any $f \in \rxn$
  and $m \in M$, $f^2m \in M$.  A quadratic module $M$ in $\rxn$ is
  \emph{finitely generated} if there exists a finite set
  $G=\{g_1, \dots, g_s\} \subseteq \rxn $ such that
  $ M = \{\sigma_0+\sum_{i=1}^{s}\sigma_i \cdot g_i \,|\, \sigma_i \in \sum
  \rxn^2, i \!=\!0, \ldots, s\} $. In this case, $M$ is said to be
  generated by $G$ and written as $M = \qm(G)$.
\end{definition}

Let $G$ be a polynomial set in $\rxn$. The \emph{non-negative set} of $G$, denoted by $\semialgebraic(G)$, is defined as $\{\pointxn \in \mathbb{R}^n \,|\, g_i(\pointxn) \ge 0 \text{ for all } i=1, \ldots, s\}$. A finitely generated quadratic module $\qm(G)$ in $\rxn$ is said to be \emph{bounded} if the non-negative set $\semialgebraic(G)$ is a bounded set in $\mathbb{R}^n$ and \emph{unbounded} otherwise. Furthermore, a quadratic module in $\rxn$ is said to be \emph{Archimedean} if it contains the polynomial $N - ||\varxn||^2$ for some $N \in \mathbb{N}$, which is called \emph{Archimedean polynomial}. It is clear that Archimedean quadratic modules are bounded. 

\begin{theorem}[Putinar's criterion, \citet{P93p}] \th\label{thm:archimedean}
    For a quadratic module $\qm(G)$ of $\rxn$ with $G = \{g_1,\ldots,g_s\}$, $\qm(G)$ is Archimedean if and only if there exists some $g \in \qm(G)$ such that $\semialgebraic(g)$ is compact.
\end{theorem}

Given a quadratic module $\qm(G)$ in $\rxn$, the fundamental membership problem is to determine whether an arbitrary polynomial $f$ belongs to $\qm(G)$, and this problem has been solved in specific cases. The well-known Putinar's Positivstellensatz about strictly positive polynomials is recalled below. 

\begin{theorem}[Putinar Positivstellensatz, \citet{P93p}]\th\label{thm:Putinar}
    Let $ G=\{g_1,\ldots $ $, g_s\}$ be a polynomial set in $\rxn$  such that $\qm(G) $ is Archimedean and $f \in \rxn$ be a polynomial that is strictly positive on $\semialgebraic(G)$. Then $f \in \qm(G)$.
\end{theorem}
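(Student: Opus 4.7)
The plan is to argue by contradiction, invoking a Hahn--Banach separation followed by a Haviland-style representation of the separating functional as integration against a measure supported on $\semialgebraic(G)$. Suppose, toward contradiction, that $f \notin \qm(G)$. Since $\qm(G)$ is a convex cone in $\rxn$ and $f$ lies outside it, I would like to exhibit a linear functional $L \colon \rxn \to \rnum$ with $L(1)=1$, $L \ge 0$ on $\qm(G)$, and $L(f) \le 0$. To apply a separation theorem in the (infinite-dimensional) space $\rxn$, I first use the Archimedean hypothesis to equip $\rxn$ with a useful seminorm: for every $p \in \rxn$ there exists $N \in \mathbb{N}$ with $N \pm p \in \qm(G)$ (this follows from $N - \sum x_i^2 \in \qm(G)$ together with standard algebraic manipulations of sums of squares and the generators), so one can set $\|p\| = \inf\{N : N \pm p \in \qm(G)\}$, a seminorm under which $\qm(G)$ has nonempty "order-unit" interior at $1$. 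Separating $f$ from $\qm(G)$ in this ordered vector space then yields the desired $L$, after a suitable normalization.

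Next I would show that $L$ is a moment functional, i.e.\ that there exists a positive Borel measure $\mu$ on $\rnum^n$ with $L(p) = \int p \, d\mu$ for every $p \in \rxn$. The Archimedean condition provides the growth control on $|L(\p{x}^{\p{\alpha}})|$ needed to extend $L$ by continuity from polynomials to the space of continuous functions on a sufficiently large ball, at which point the Riesz representation theorem (or equivalently Haviland's theorem, whose hypothesis is satisfied because $L$ is nonnegative on every polynomial that is nonnegative on the support we will identify) delivers $\mu$. The key quantitative point is that, for each coordinate $x_j$, the Archimedean inequality $N - \sum x_i^2 \in \qm(G)$ bounds $L(x_j^{2k})$ polynomially in $N^k$, so Carleman's condition holds and the moment problem is determinate on a compact set.

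I would then locate the support of $\mu$ inside $\semialgebraic(G)$. For every $i \in \{1,\dots,s\}$ and every $\sigma \in \sum \rxn^2$, the element $\sigma g_i$ lies in $\qm(G)$, hence $\int \sigma g_i \, d\mu \ge 0$. Approximating the indicator of the open set $\{g_i < 0\}$ by polynomials of the form $\sigma$ (using that sums of squares are uniformly dense among nonnegative continuous functions on compacta, via Weierstrass), one concludes $\mu(\{g_i < 0\}) = 0$, and so $\operatorname{supp}(\mu) \subseteq \semialgebraic(G)$. Combined with $L(1) = \mu(\rnum^n) = 1 > 0$, the measure $\mu$ is a nonzero positive measure on $\semialgebraic(G)$.

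Finally, since $f$ is strictly positive on $\semialgebraic(G)$, which is compact (because $\qm(G)$ is Archimedean), $f$ attains a positive minimum $m > 0$ on $\operatorname{supp}(\mu)$; therefore $L(f) = \int f \, d\mu \ge m \cdot \mu(\semialgebraic(G)) = m > 0$, contradicting $L(f) \le 0$. The main obstacle in this plan is the measure-representation step: one must carefully exploit the Archimedean bound to ensure both the extension of $L$ to a space of continuous functions and the determinacy of the associated moment problem, so that the Riesz/Haviland representation actually yields a measure sitting on the compact set $\semialgebraic(G)$; the separation and the final contradiction are comparatively routine once that machinery is in place.
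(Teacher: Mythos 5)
The paper does not prove this statement at all: it is quoted as Putinar's Positivstellensatz with a citation to \cite{M93p} and used later as a black box, so there is no in-paper argument to compare against. What you have written is the classical functional-analytic proof (separation of $f$ from the Archimedean quadratic module, representation of the separating functional by a positive measure, localization of its support inside $\semialgebraic(G)$, contradiction with strict positivity), and in outline it is correct; the separation step in particular is fine, since $1$ is an order unit for $\qm(G)$, so the cone has nonempty interior in the order-unit seminorm, Eidelheit separation applies, and $L(1)>0$ follows because $L(1)=0$ together with $N\pm p\in\qm(G)$ would force $L\equiv 0$.

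The one step you should repair is the measure-representation step, which as stated is circular. You cannot ``extend $L$ by continuity from polynomials to continuous functions on a large ball'': the only continuity available is with respect to the order-unit seminorm, and dominating that seminorm by the sup norm on a ball is essentially the theorem being proved. Likewise, Haviland's hypothesis (nonnegativity of $L$ on every polynomial nonnegative on the target set) is exactly what you do not yet know, and your parenthetical justification appeals to the support of the measure you are still trying to construct. The honest route, which you half-name, is: from $N-\sum_i x_i^2\in\qm(G)$ deduce that the bounded elements form a subalgebra containing every $x_j$, hence $|L(\p{x}^{\p{\alpha}})|\le C^{|\p{\alpha}|}$ for some $C$; since $L\ge 0$ on sums of squares, Nussbaum's theorem (the multivariate Carleman condition you mention), or equivalently Putinar's original argument via the GNS construction and the spectral theorem for the commuting bounded multiplication operators, produces a representing measure $\mu$, and the moment bounds force $\operatorname{supp}\mu\subseteq[-C,C]^n$. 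Once $\mu$ exists with compact support, your localization argument (using $\int \sigma g_i\,d\mu\ge 0$ for all sums of squares $\sigma$ and uniform approximation of $\sqrt{h}$ by polynomials on a compact set) and the final contradiction $L(f)\ge m\,\mu(\rnum^n)>0$ go through as you describe.
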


The aim of the present paper is to establish a constructive approach to compute the certificate of $f$ which is strictly positive on $\semialgebraic(G)$, for a given Archimedean quadratic module $\qm(G)$. In this context,  it is generally assumed that the Archimedean property arises from some generator $g \in G$ with bounded $\semialgebraic(g)$. If no such generator exists, we adjoin to $G$ the polynomial $N - ||\varxn||^2$ for a sufficiently large $N$.

\subsection{Averkov's Lemma to aid computing certificates of Positivstellensatz theorems} 

Several (semi-)constructive approaches exist in the literature to compute or construct certificates of strictly positive polynomials in quadratic modules \citet{S05o,N07o,A13C}. A common step in these approaches is the perturbation of the input polynomial $f \in \quadmod(G)$ by translating the positivity of $f$ over $\semialgebraic(G)$ to the positivity of $f - h$ in some compact set $B$ where $h \in \quadmod(G)$. In the following theorems, we restrict to the case $B \neq \emptyset$. When $B = \emptyset$, the statement holds trivially and no further construction is needed. The following result due to Averkov is used in our construction. For self-containedness, the proof in \citet{A13C} is sketched without giving details of the derivation. 

\begin{lemma}[{\citet[Lemma~3.3]{A13C}}]\th\label{lemma:c}
  Let $ G = \{g_1, \ldots, g_s\} $ be a polynomial set in $\rxn$ and $ f $ be a
  polynomial in $\rxn$ such that $f > 0$ on $ \semialgebraic(G) $. Then,
  for any compact subset $ B $ of $ \rnum^n $, a polynomial $ h \in \qm(G) $
  exists such that $ f - h > 0 $ on $ B $.
\end{lemma}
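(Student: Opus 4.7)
\emph{Plan.} My plan is to produce $h$ in closed form as
\[
h \;=\; \epsilon \;+\; \lambda\sum_{i=1}^{s}(M+1-g_i)^{2k}\,g_i,
\]
where $\epsilon,\lambda>0$ are small, $M$ is an upper bound for $|g_i|$ on $B$, and $k$ is a positive integer to be chosen large. This polynomial lies in $\qm(G)$ because each multiplier $(M+1-g_i)^{2k}=\bigl[(M+1-g_i)^{k}\bigr]^{2}$ is a sum of squares, as is the constant $\epsilon$.

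My first step would be to exploit the compactness of $B$, the continuity of $f$ and of the $g_i$'s, and the hypothesis $f>0$ on the closed set $\semialgebraic(G)$ to produce constants $\delta,c>0$ with $f\ge c$ on the thickened neighborhood
\[
B_\delta \;:=\; \{\bar{\p{x}}\in B : g_i(\bar{\p{x}})\ge -\delta,\ i=1,\dots,s\},
\]
together with a uniform bound $C:=\max_{B}|f|$. On $B_\delta$, each summand $(M+1-g_i)^{2k}g_i$ is either non-positive (when $g_i<0$) or at most $(M+1)^{2k}M$ (when $g_i\ge 0$); taking $\epsilon$ of order $c/3$ and $\lambda$ so that $s\lambda(M+1)^{2k}M\le c/3$ would then yield $h\le 2c/3 < c\le f$ on $B_\delta$.

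My next step would be to treat the complement $B\setminus B_\delta$. Each such point admits some index $j$ with $g_j\le -\delta$, so $(M+1-g_j)^{2k}\ge(M+1+\delta)^{2k}$, and the $j$-th summand contributes at most $-\lambda(M+1)^{2k}r^{2k}\delta$ to $h$, where $r:=(M+1+\delta)/(M+1)>1$. Since the positive contributions from the remaining indices are already bounded by the calibration performed in the first step, for $k$ large enough the geometric factor $r^{2k}$ pushes $h$ below $-C$, giving $h<-C\le f$ on $B\setminus B_\delta$.

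The main obstacle will be the simultaneous satisfaction of these two competing constraints: the multiplier $(M+1-g_i)^{2k}$ is large both where $g_i$ is very negative (desired) and where $g_i$ is moderately positive (undesired), so the cumulative positive contribution on $B_\delta$ must remain within the margin $c$ supplied by $f$ without weakening the negative contribution needed on $B\setminus B_\delta$. The resolution is that $r>1$ provides a strict geometric gap whose $2k$-th power eventually dominates any fixed polynomial quantity in $M$, $s$, and $C$, so a single sufficiently large $k$ calibrates both inequalities at once.
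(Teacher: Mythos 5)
Your proposal is correct and takes essentially the same route as the paper's proof: your multipliers $\lambda\,(M+1-g_i)^{2k}$ are a rescaled version of Averkov's $\bigl((g_i-\gamma)/(\gamma+\epsilon)\bigr)^{2N}$, your set $B_\delta$ and constant $c$ play the roles of the region where all $g_i\ge -2\epsilon$ and of $\mu$, and your final balancing (small positive contribution on $B_\delta$ versus a geometrically growing negative contribution where some $g_j\le -\delta$) mirrors the $c(N)\to 0$, $C(N)\to+\infty$ argument. The added constant $\epsilon$ in $h$ and the calibration of $\lambda$ as a function of $k$ are harmless cosmetic differences.
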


\begin{proof}
  
  Let $ T = \{\pointxn \in B ~|~ f(\pointxn) \leq 0\} $. We use $ a $ as the
  function $ \pointxn \mapsto (g_1(\pointxn), \ldots, g_s(\pointxn)) $ from
  $ \rnum^n $ to $ \rnum^s $. Since $ a(B) $ is compact, there exists
  $ \gamma > 0 $ such that $ a(B) \subseteq (-\infty,2\gamma]^s $. By the
  assumption on $ f $, $ a(T) \cap [0,2\gamma]^s = \emptyset $. As $ a(T) $ and
  $ [0,2\gamma]^s $ are compact, there exists $ \epsilon > 0 $ such that
  $ a(T) \cap [-2\epsilon,\gamma]^s = \emptyset $. Thus, if $ \pointxn \in B $
  and $ g_i(\pointxn) \ge -2\epsilon $ for each $i = 1, \dots, s$, then
  $ f(\pointxn) > 0 $. Consequently,
  $\mu := \min \{f(\pointxn) ~|~ \pointxn \in B\ \mbox{and} \
  g_i(\pointxn) \ge -2\epsilon \mbox{ for } i = 1, \ldots, s\} > 0 $.
    
    Consider the polynomial 
    \begin{equation} \label{eq:aver-g}
        h := \sum_{i=1}^{s}\left(\frac{g_i-\gamma}{\gamma+\epsilon} \right)^{2N} \cdot g_i,
    \end{equation}
    where $N \in \mathbb{N}$ and denote

    $$c(N) := 2\gamma
    \left(\frac{\gamma}{\gamma+\epsilon} \right)^{2N}, \quad C(N) := 2\epsilon
    \left(\frac{\gamma+2\epsilon}{\gamma+\epsilon}\right)^{2N}.$$
    Then for any $ \pointxn \in B $, if $ g_i(\pointxn) \ge -2\epsilon $ for each
    $ i = 1 \ldots, s $, we have
    $ f(\pointxn) - h(\pointxn) \ge \mu -s c(N) $; otherwise if
    $ g_i(\pointxn) < -2\epsilon $ for some $i~(1 \le i \le s)$, we have
    $ f(\pointxn) - h(\pointxn) \ge \min_{\pointxn \in B}f(\pointxn) - s c(N) + C(N)
    $. Since $ c(N) \rightarrow 0 $ and $ C(N) \rightarrow +\infty $ as
    $ N \rightarrow +\infty $, by choosing $ N $ sufficiently large, we deduce
    $ f(\pointxn) - h(\pointxn) > 0 $ for every $ \pointxn \in B $.
\end{proof}

\section{Extending Averkov's results}
\label{sec:averkov}

This section discusses two extensions of Averkov's lemma. Firstly, details about deriving various parameters are given. Secondly, the lemma is generalized to allow arbitrary subsets of $\rnum^n$ for $B$, thus relaxing the requirement that $B$ be compact. These extensions make the generalized lemma to be applicable in wider contexts.

\subsection{\texorpdfstring{Estimation of parameters $\epsilon$, $\gamma$, and $N$}{Estimation of parameters epsilon, gamma, and N}}

The original proof for \th\ref{lemma:c} is semi-constructive: a specific polynomial $ h $ in \eqref{eq:aver-g} is constructed and proved to satisfy the conditions of the lemma; however, parameters like $\epsilon, \gamma$ and $N$ are existentially introduced without specifying proper values. We provide details on the computation of the values of these parameters in the proposition below so that with these values, the constructed polynomial $ h $ in \eqref{eq:aver-g} satisfies the conditions of the lemma. 

\begin{proposition} \th\label{inequality-proof-averkov}
    Let $g_1,\ldots, g_s, f$, and $B$ be as stated in \th\ref{lemma:c} and the parameters $\gamma$, $\epsilon$, and $N$ be such that
    \begin{equation}\label{eq:aver-para}
        \begin{split}       
          \gamma &> \frac{1}{2}\max_{1 \leq i \leq s} \max_{\pointxn \in B} g_i(\pointxn),\\
          \epsilon &< \frac{1}{2} \left|  \max_{\pointxn \in f^{-1}((-\infty,0]) \cap B}   \min_{1 \leq i \leq s} g_i(\pointxn) \right|, \\
          N &> \frac{1}{2}\max\left\{\frac{\log(\mu) - \log(2s\gamma)}{\log(\gamma) - \log(\gamma+\epsilon)}, \frac{\log(M + \mu) - \log(2\epsilon)}{\log(\gamma+2\epsilon) -\log(\gamma+\epsilon)}\right\}, 
        \end{split}
    \end{equation}
    where $M = -\min_{\pointxn \in B}f(\pointxn)$. Then the polynomial $h$ in \eqref{eq:aver-g} satisfies $f - h > 0 $ on $B$. 
\end{proposition}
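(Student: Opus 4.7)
The plan is to instantiate the three existential parameters $\gamma$, $\epsilon$, $N$ in the proof of Lemma~\ref{lemma:c} with the explicit formulas in \eqref{eq:aver-para}, verify that each satisfies the corresponding implicit condition used by Averkov, and then rerun the same two-case analysis to conclude $f - h > 0$ on $B$.

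First, $\gamma = \tfrac{1}{2} \max_i \max_{\bar{\p{x}} \in B} g_i(\bar{\p{x}})$ is well-defined by compactness of $B$ and continuity of the $g_i$, and directly yields $a(B) \subseteq (-\infty, 2\gamma]^s$. For $\epsilon$, set $T = \{\bar{\p{x}} \in B : f(\bar{\p{x}}) < 0\}$; the positivity of $f$ on $\semialgebraic(G)$ forces $\min_i g_i(\bar{\p{x}}) < 0$ for every $\bar{\p{x}} \in T$, and compactness together with continuity of $\min_i g_i$ makes $\max\{\min_i g_i(\bar{\p{x}}) : \bar{\p{x}} \in T\}$ a strictly negative real number (the degenerate case $T = \emptyset$ is handled separately by any positive $\epsilon$). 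Choosing $\epsilon$ below half the absolute value of this maximum guarantees that for every $\bar{\p{x}} \in T$ some $i$ has $g_i(\bar{\p{x}}) < -2\epsilon$; equivalently, whenever all $g_i(\bar{\p{x}}) \ge -2\epsilon$ on $B$ we get $f(\bar{\p{x}}) \ge 0$, which is precisely the condition used in Lemma~\ref{lemma:c} and also yields $\mu > 0$.

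For $N$, I would revisit the two-case analysis on the univariate function $\psi(g) = g \cdot ((g-\gamma)/(\gamma+\epsilon))^{2N}$ applied pointwise to each $g_i(\bar{\p{x}})$. A brief calculus check shows $\psi(g) \le c(N)$ for all $g \in [-2\epsilon, 2\gamma]$ and $\psi(g) < -C(N)$ for all $g < -2\epsilon$. The argument in the proof of Lemma~\ref{lemma:c} then delivers $f - h \ge \mu - s\,c(N)$ when every $g_i(\bar{\p{x}}) \ge -2\epsilon$, and $f - h \ge -M - s\,c(N) + C(N)$ when at least one $g_j(\bar{\p{x}}) < -2\epsilon$. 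Hence it suffices to pick $N$ so that simultaneously $s\,c(N) < \mu$ and $C(N) > M + \mu$; taking logarithms of each and clearing exponents using $\gamma/(\gamma+\epsilon) < 1 < (\gamma+2\epsilon)/(\gamma+\epsilon)$ yields exactly the two thresholds inside the max in \eqref{eq:aver-para}, so the stated lower bound on $N$ is enough.

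The main obstacle is sign-tracking in the first logarithmic inequality: since $\log\gamma - \log(\gamma+\epsilon) < 0$, dividing by it flips the inequality direction, and this must be done carefully to reproduce the stated threshold (in particular, the signs of $\log\mu - \log(2s\gamma)$ and of the denominator conspire to give a positive bound on $N$). A small subsidiary check is that the bound $\psi(g) \le c(N)$ holds uniformly on $[-2\epsilon, 2\gamma]$ for every positive integer $N$, not merely at the endpoints or asymptotically; this needs a brief analysis of the interior critical point $g = \gamma/(2N+1)$ of $\psi$.
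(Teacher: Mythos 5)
Your proposal is correct and follows essentially the same route as the paper's own proof: you instantiate $\gamma$, $\epsilon$, $N$ in the construction of Lemma~\ref{lemma:c}, check that $\gamma$ bounds $a(B)$, that $\epsilon$ separates $a(T)$ from $[-2\epsilon,2\gamma]^s$, and then obtain the two thresholds for $N$ from exactly the same pair of inequalities $s\,c(N) < \mu$ and $C(N) > M + \mu$, with the same sign-flip when dividing by $\log\gamma - \log(\gamma+\epsilon) < 0$. The extra details you supply (the uniform bound $\psi \le c(N)$ on $[-2\epsilon,2\gamma]$ and the degenerate case $T=\emptyset$) are harmless refinements of the same argument.
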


\begin{proof}
  From the proof of \th\ref{lemma:c}, one can see that $\gamma$ needs to satisfy
  the condition that $2 \gamma \ge g_i(\pointxn)$ for each $i=1, \ldots, s$ and
  any $\pointxn \in B$, and thus the value of $ \gamma$ in \eqref{eq:aver-para}
  suffices.
    
    The condition for $ \epsilon$ is that $ a(T) \cap [-2\epsilon, 2 \gamma]^s = \emptyset $. Then we claim that for any $\pointxn \in T$, there exists $g_i~(1\leq i \leq s) $ such that $ g_i(\pointxn) < -2 \epsilon $. Otherwise, assume that there exists $\pointxn \in T$ such that $ g_i(\pointxn) \geq - 2 \epsilon $ for $i=1, \ldots, s$, then we have $(g_1(\pointxn),\ldots,g_s(\pointxn)) \in a(T) \cap [-2\epsilon, 2\gamma]^s $: a contradiction. From this we know that $-2\epsilon > \min_{ i = 1, \ldots, s} g_i(\pointxn) $ for all $\pointxn \in T$, and thus the value of $ \epsilon$ in \eqref{eq:aver-para} suffices.

    In the end, we prove the inequality for $N$. Let $ M = -\min_{\pointxn \in
      B}f(\pointxn) $. Then from the proof of \th\ref{lemma:c}, one can see that if both $ \mu - s c(N)$ and $-M- s  c(N) + C(N)$ are positive, then $f -h$ is also positive. From $ \mu - s c(N) > 0$, it follows that $ c(N) = 2\gamma \left(\frac{\gamma}{\gamma+\epsilon} \right)^{2N} \!<\! \frac{\mu}{s} $. Therefore, we can set $ 2N > \frac{\log(\mu) - \log(2s\gamma)}{\log(\gamma) - \log(\gamma + \epsilon)} $.  Then with $ c(N) < \frac{\mu}{m} $, we can set $ C(N) > M + \mu $ so that $-M - s c(N) + M + \mu  > 0  $. From this constraint, we obtain the following inequality $N > \frac{1}{2} \frac{\log(M + \mu) - \log(2\epsilon)}{\log(\gamma+2\epsilon) - \log(\gamma+\epsilon)}$. Taking the maximum of both values on the right hand side of the two inequalities above, we have the value for $N$ in \eqref{eq:aver-para}. 
\end{proof}

Plugging the detailed values of the parameters in \th\ref{lemma:d} into Averko-v's method (\th\ref{lemma:c}), Algorithm~\ref{sec:an-algor-descr} below is derived, making Averkov's result constructive.

\begin{algorithm}[h]
    \caption{Algorithm for Averkov's construction \\ $ \texttt{certificate} := \algAverkov(G, f, B) $}  
    \label{sec:an-algor-descr}  
    \kwInput{A polynomial set $G = \set{g_1, \dots, g_s} \subseteq \kxn$, a polynomial $f \in \kxn$ such that $f > 0$ over $\semialgebraic(G)$, and a compact set $B \subseteq \mathbb{R}^n$}
    \kwOutput{A sequence of sum-of-squares multipliers $(\sigma_1, \ldots, \sigma_s)$ in $\kxn$ such that $f - \sum_{i=1}^{s}{\sigma_i \cdot g_i} > 0$ over $B$}

    \uIf{$B$ is empty or $\min_{\pointxn \in B}f(\pointxn) > 0$}
    {
        \Return $(0, 0, \ldots, 0)$\;
    }
    \Else
    {
        Choose $\gamma$, $\epsilon$, and $N$ as in \eqref{eq:aver-para}\;
        \Return $\left( \left(\frac{g_1 - \gamma}{\gamma +\epsilon}\right)^{2N}, \ldots, \left(\frac{g_s - \gamma}{\gamma + \epsilon}\right)^{2N}\right)$\;
    }
\end{algorithm}

For actual computation with a compact set $B$ in Proposition~\ref{inequality-proof-averkov}, one can further require that $B$ is a semi-algebraic set represented by polynomial inequalities or is covered by such a semi-algebraic set. In this way, since the parameters in \eqref{eq:aver-g} do not need to be computed exactly,
their estimation reduces to a standard optimization problem. For example, the optimal value of the right-hand side of the second inequality in \eqref{eq:aver-para} can be formulated as
\begin{equation} \label{eq:optimal}
    \begin{aligned}
\text{maximize} \quad & t \\
\text{subject to} \quad & g_i(\pointxn) \ge t, \quad i = 1, \ldots, s, \\
& f(\pointxn) \le 0, \quad \pointxn \in B.
\end{aligned}
\end{equation}

The problem in \eqref{eq:optimal} can be solved by methods like semidefinite programming (SDP) or cylindrical algebraic decomposition (CAD), for example using numerical solvers such as \texttt{Mosek} \citet{mosek} after converting the input into a  standard form. Taking the case when $s=2$, since $ \min \{g_1,g_2\} = \frac{g_1+g_2}{2} - |\frac{g_1-g_2}{2}|$, we add the constraints as $ p^2 = \frac{(g_1-g_2)^2}{2}$ and $p>0$. Then the problem can be restated as

$$    \begin{aligned}
\text{maximize} \quad & t \\
\text{subject to} \quad & \frac{g_1+g_2}{2} - p \ge t, \quad p^2 = \frac{(g_1-g_2)^2}{2} , \\
& p(\pointxn)>0, f(\pointxn) \le 0 \mbox{ for any }\pointxn \in B.
\end{aligned}$$
This technique can then be applied recursively for $s>2$. 

It is also worth mentioning that in our overall algorithm (Algorithm~\ref{alg:overall}) for computing the certificates, we only use the compact set $B$ as a non-negative set $\semialgebraic(g)$ associated with some polynomial $g \in \qm(G)$. The existence of such a polynomial $g$ is guaranteed by the assumption of the Archimedean condition. In particular, the set $B$ is a closed ball if $g = N - ||\varxn||^2$. For a general compact set $B$, our approach requires a positive constant $\gamma$ that witnesses the boundedness of $B$, i.e., $B \subseteq \semialgebraic(\gamma - ||\varxn||^2)$. Then one can use the semialgebraic set $\tilde{B}$ generated by the polynomial $\gamma - ||\varxn||^2$ and compute the parameters as the lower or upper bounds for $\tilde{B}$ instead. 

\begin{example}\rm \label{ex:multiaverkov}
    We illustrate Algorithm~\ref{sec:an-algor-descr} with the example in \citet[5.5.11]{S24a}. Let polynomial set $G = \{g_1,g_2,$ $g_3,g_4\} \subseteq \mathbb{Q}[x_1,x_2,x_3] $, where $g_i = 2x_i - 1 (i=1,\ldots,3), g_4 = 1 - x_1x_2x_3$. The set $\semialgebraic(G) \subseteq [\frac{1}{2}, 4]^3$ is compact and a polynomial $f = -(x_1+3)(x_1-5)-(x_2+3)(x_2-5)-(x_3+3)(x_3-5)$ which is positive on $\semialgebraic(G)$. Then set $B = [-4,4]^3$. First, we compute the minimum of $f$ over $B$, which is $ -24$. Since it is not positive, we follow the else block in the algorithm. The values of $\epsilon, \gamma$ and $N$ computed with \eqref{eq:aver-para} are $3$, $36$, and $21$ respectively.
\end{example}

As noted in \citet[5.5.11]{S24a}, the quadratic module in the above example is non-Archimedean. This example illustrates that Averkov's approach does not rely on the Archimedean condition, and, in fact, does not even require the bounded semialgebraic set $\semialgebraic(G)$. In contrast, the method we present later extends the set $B$ to the entire space and makes essential use of the Archimedean condition. This highlights the critical role that Archimedean property plays in Putinar's Positivstellensatz.

\begin{example}\rm \label{ex:averkov2}
    Now we consider the first example in \citet[Lemma 18]{S25a}. Let $G = \{g_1,g_2,g_3,g_4\} \subseteq \mathbb{Q}[x_1,x_2]$, where $g_1 = x_1$, $g_2 =x_2$, $g_3 = (1-x_1)(1-x_2)$, and $g_4 = 2 - (x_1^2 - x_2^2)$. The set $\semialgebraic(G) = [0,1]^2$ is compact and a polynomial $ f = (x_1 + 1)(2 - x_1) + (x_2 + 1)(2 - x_2)$ which is positive on $\semialgebraic(G)$. Then set $B = \semialgebraic(g_4) = \{\|\pointxn\| \le \sqrt{2}\}$ is compact. First, we compute the minimum of $f$ over $B$, which is $ -\frac{5}{2}$. Since it is not positive, we follow the else block in the algorithm. The values of $\epsilon, \gamma$ and $N$ computed with \eqref{eq:aver-para} are $\frac{1}{3}$, $4$, and $19$ respectively.

\end{example}

\subsection{Generalization of Averkov's Lemma}

This subsection discusses an extension of Averkov's \th\ref{lemma:c} by generalizing the compact set $ B $ to any subset in $ \rnum^{n} $ when the set $ G $ consists of a single generator. Using this result, by setting $B = \mathbb{R}$ in the univariate case, the requirement to add an Archimedean polynomial (whose certificates cannot be easily computed) to the set of generators can be relaxed, thus allowing the computation of the certificates directly with respect to the original generators $G$. Before establishing our result, we first recall the Łojasiewicz inequality at infinity for the polynomial with a compact zero set presented in \cite{G98g}, whose proof employs techniques from algebraic geometry and differential topology.
    
\begin{theorem}[{\citet[Theorem~1]{G98g}}] \th\label{thm:coercive}
Let $F \in \rxn$ be a polynomial of degree $d > 2$ such that the set $F^{-1}(0)$ is compact. Then there exist constants $c, R>0$ such that      
\begin{equation}\label{ineq:coercive}
    |\pointxn^{(d-1)^n - d }| |F(\pointxn)| \ge c \text{\quad for all } |\pointxn|
    > R,
\end{equation}
where $|\pointxn| = \max\{|x_1|,\ldots,|x_n|\}$ is the $L_\infty$ norm of
$\pointxn$.
\end{theorem}

The author of \cite{G98g} points out that no known example achieves the exponent $(d-1)^n - d$ in \th\ref{thm:coercive}. In most cases, the inequality $|\pointxn^{ t } |\cdot | F(\pointxn)| \ge c \text{ for all } |\pointxn|
    > R$ holds for some degree $t < (d-1)^n - d$. This suggests that the degree could potentially be sharpened.
    
\begin{theorem}\th\label{lemma:d}
  Let $g \in \rxn$ be a polynomial with bounded $ \semialgebraic(g) $ and
  $ f \in \rxn$ be strictly positive on $ \semialgebraic(g) $. Then, for any
  subset $ B $ of $ \rnum^n $, there exists a polynomial $ h \in \qm(g) $ such
  that $ f - h > 0$ over $B$.
\end{theorem}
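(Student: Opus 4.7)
The plan is to extend Averkov's construction (Lemma~\ref{lemma:c}) to the single-generator case so that it works with $B=\rnum^n$ in place of a compact set; any $h$ with $f-h>0$ on $\rnum^n$ automatically satisfies the conclusion for every $B\subseteq\rnum^n$, so I may assume $B=\rnum^n$. The key modification is to multiply the Averkov-type sum-of-squares $\bigl((g-\gamma)/(\gamma+\epsilon)\bigr)^{2N}$ by an extra SOS factor $(1+\|\p{x}\|^2)^{2k}$, whose polynomial growth at infinity is tailored to dominate any polynomial decay of $f$ on the unbounded region where $g$ is substantially negative.

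Since $\semialgebraic(g)=\{g\geq 0\}$ is closed (by continuity of $g$) and bounded, it is compact, so $\mu:=\min_{\semialgebraic(g)} f>0$ and $\gamma:=\frac{1}{2}\max_{\semialgebraic(g)} g\geq 0$ are well defined. Using an asymptotic argument for the polynomial $g$ (see the obstacle discussion below), I obtain constants $\delta_0,R_0>0$ with $g(\p{x})\leq -\delta_0$ whenever $\|\p{x}\|\geq R_0$. I then fix $\epsilon>0$ so small that $2\epsilon<\delta_0$, forcing $K_\epsilon:=\{g\geq -2\epsilon\}\subseteq\{\|\p{x}\|<R_0\}$ to be compact; shrinking $\epsilon$ further if needed, continuity of $f$ together with $\semialgebraic(g)\subseteq K_\epsilon$ ensures $f>\mu/2$ on all of $K_\epsilon$.

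For positive integers $N,k$ to be fixed, I set
\[
h(\p{x}) := g(\p{x})\cdot \left(\frac{g(\p{x})-\gamma}{\gamma+\epsilon}\right)^{2N}\cdot (1+\|\p{x}\|^2)^{2k},
\]
so that $h=\sigma_1 g$ with $\sigma_1$ a single square (up to a positive scalar), hence SOS, and $h\in\qm(g)$. Checking $f-h>0$ on $\rnum^n$ splits into two regions. On $K_\epsilon$: on $\semialgebraic(g)$ the bound $|g-\gamma|\leq\gamma$ gives $0\leq h\leq 2\gamma(\gamma/(\gamma+\epsilon))^{2N}(1+R_0^2)^{2k}$, which tends to $0$ as $N\to\infty$; on $K_\epsilon\setminus\semialgebraic(g)$ we have $g<0$, so $h\leq 0$; combined with $f>\mu/2$ on $K_\epsilon$, choosing $N$ large yields $f-h>0$ there. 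On $\rnum^n\setminus K_\epsilon=\{g<-2\epsilon\}$: $|g-\gamma|>\gamma+2\epsilon$ and $|g|>2\epsilon$ give $h<0$ with $|h|\geq 2\epsilon\,C(N)\,(1+\|\p{x}\|^2)^{2k}$, where $C(N):=((\gamma+2\epsilon)/(\gamma+\epsilon))^{2N}\to\infty$; using the bound $|f(\p{x})|\leq M_f(1+\|\p{x}\|^2)^{\lceil d_f/2\rceil}$ for $d_f=\deg f$, choosing $k$ with $2k\geq\lceil d_f/2\rceil$ and then $N$ with $2\epsilon C(N)>M_f$ delivers $f-h>0$ on this region as well.

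The main obstacle is the asymptotic statement invoked in the setup: if $g\in\rnum[\p{x}]$ has bounded $\{g\geq 0\}$, then $\limsup_{\|\p{x}\|\to\infty}g(\p{x})<0$, equivalently $\{g\geq -\epsilon\}$ is bounded for all sufficiently small $\epsilon>0$. I would prove this by contradiction via the semialgebraic Curve Selection Lemma at infinity: a sequence $\p{x}_n\to\infty$ with $g(\p{x}_n)\to 0^-$ would produce a semialgebraic arc along which $g\to 0$, and a small perturbation transverse to $\{g=0\}$ would then exhibit unbounded points of $\{g\geq 0\}$, contradicting the hypothesis. With this fact in hand, all remaining verifications are routine estimates in the spirit of Averkov's original argument, with the added polynomial factor $(1+\|\p{x}\|^2)^{2k}$ supplying exactly the growth required in the unbounded region.
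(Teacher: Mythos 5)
Your overall route---reduce to $B=\rnum^n$ and run an Averkov-type construction, with the extra factor $(1+\|\p{x}\|^2)^{2k}$ inserted to dominate the growth of $f$ at infinity---is the same in spirit as the paper's proof, which instead passes to $\tilde g=\|\p{x}\|^2g$ and factors out $\tilde g^{2k}$; your verification that $h\in\qm(g)$ and your estimate on the region $\{g<-2\epsilon\}$ are fine. The genuine gap is exactly the statement you single out as the main obstacle: for $n\ge 2$ it is false that bounded $\semialgebraic(g)$ forces $\limsup_{\|\p{x}\|\to\infty}g<0$. Take $g=-\bigl((xy-1)^2+x^4\bigr)\bigl((x-3)^2+y^2\bigr)$: the first factor never vanishes, so $\semialgebraic(g)=\{(3,0)\}$ is nonempty and bounded, yet $g(t,1/t)=-\bigl(t^4(t-3)^2+t^2\bigr)\to 0^-$ as $t\to 0^+$ while $\|(t,1/t)\|\to\infty$. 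Hence $K_\epsilon=\{g\ge -2\epsilon\}$ is unbounded for every $\epsilon>0$, and no curve-selection-plus-perturbation argument can show otherwise (here $g<0$ off a single point, so no perturbation near infinity reaches $\{g\ge 0\}$). This is fatal to the two-region analysis, not just to its wording: once $K_\epsilon$ is unbounded you cannot obtain ``$f>\mu/2$ on $K_\epsilon$'', since positivity of $f$ on $\semialgebraic(g)$ controls nothing on the unbounded part of $K_\epsilon$, where $g$ is slightly negative and your $h\le 0$. Concretely, $f=1-y^2(x-3)^2$ is strictly positive on $\semialgebraic(g)$, but along $(t,1/t)\in K_\epsilon$ one has $f\sim -9t^{-2}\to-\infty$, while (note $\gamma=0$ here) with your minimal choice $2k\ge\lceil \deg f/2\rceil$, i.e.\ $k=1$, the compensating term satisfies $|g|\sigma\sim \epsilon^{-2N}t^{4N-2}\to 0$; so $f-h\to-\infty$ and the constructed certificate itself fails, not merely its analysis.

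Two comparative remarks. First, the fact you need is true for $n=1$ (a univariate $g$ with bounded $\semialgebraic(g)$ is either a negative constant or has even degree with negative leading coefficient, hence $g\to-\infty$ at $\pm\infty$), so your argument does yield the univariate case. Second, the paper's own proof hinges on the closely related claim in its first paragraph that $\semialgebraic(g+K)$ is bounded, argued by picking a line through an unbounded connected component; the same example shows this step needs additional justification, since there $\semialgebraic(g+K)$ is unbounded for every $K>0$ although $g\to-\infty$ along every line. So the coercivity of $g$ at infinity is the real crux of extending Lemma~\ref{lemma:c} beyond compact $B$, and it must come from a different ingredient---for instance, for a single generator $\qm(g)$ equals the preordering generated by $g$, which by W\"ormann--Schm\"udgen is Archimedean when $\semialgebraic(g)$ is compact, so some $N_0-\|\p{x}\|^2\in\qm(g)$ can supply the missing growth---rather than from the perturbation argument you sketch.
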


\begin{proof}

  The following proof is constructive and consists of three main steps. First, we construct a modified polynomial $\tilde{g} \in \qm(g)$ such that $\semialgebraic(\tilde{g}) = \semialgebraic(g)$ and $\tilde{g}(\pointxn) \to -\infty$ uniformly as $\|\pointxn\| \to \infty$. In this way, this new polynomial $\tilde{g}$ thereby exhibits behaviors similar to the Archimedean polynomial. Second, we construct a univariate polynomial $H(t)$ with controlled growth and define $h := H(\tilde{g}) \in \qm(g)$. Third, we show that $f - h$ is strictly positive over $B$ by analyzing its behaviors separately on a compact subset and its complement. The former compact case is treated similarly to \th\ref{lemma:c}, while the non-compact case relies on the decay behavior of $\tilde{g}$ at infinity to ensure positivity.

  \textit{Step 1.} Since the non-negative set $\semialgebraic(g)$ of a polynomial $g$
  is bounded, we know that the zero set $g^{-1}(0) \subseteq \semialgebraic(g)$ is both closed and bounded, and thus compact. Without loss of generality, we assume that $\deg(g) > 2$; otherwise, we can use $(1+\| \varxn\|^2) \cdot g$ whose degree is greater than $2$, its zero set is also compact, and a member of $\quadmod((1+\| \varxn\|^2) \cdot g)$ is also a member of $\quadmod(g)$. By \th\ref{thm:coercive} there exist two constants $c_1, R_1>0$
  such that $|g(\pointxn)| \ge c_1|\pointxn|^{d-(d-1)^n} $ for any $\pointxn$ with
  $|\pointxn| > R_1$. It is known that all norms defined on a finite dimensional normed linear
  space are equivalent. Then, there exists $c_2>0$ such that
  $|\pointxn| \ge c_2 \|\pointxn\|$, where $\|\pointxn\| =
  \sqrt{x_1^2+\cdots+x_n^2}$. Meanwhile, with $\semialgebraic(g)$ bounded, there
  exists $R_2 > 0$ such that $|g(\pointxn)| = -g(\pointxn)$ for
  $\|\pointxn\| \ge R_2$. Letting $R = \max\{\frac{R_1}{c_2},R_2\}$ and $c= c_1 c_2^{d-(d-1)^{n}}$,  we have
  $$ -g(\pointxn) = |g(\pointxn)| \ge c_1 |\pointxn|^{d-(d-1)^n} \ge c
  \|\pointxn\|^{d-(d-1)^n} $$ 
  for all $\|\pointxn\| > R $
  since $|\pointxn| \ge c_2\|\pointxn\| > c_2 R \ge c_2 \frac{R_1}{c_2}$.

    Now, let $\tilde{g} = \|\varxn\|^{2\lfloor \frac{(d-1)^n}{2}  \rfloor+2} \cdot g \in
    \qm(g)$. For $\|\pointxn\| > R$, we have \begin{equation}\label{equ:coercive}
        \tilde{g}(\pointxn) \le -c\|\pointxn\|^d.
  \end{equation} 
  Also, for any $K>0$, we know that $\tilde{g} + K \le K - c\|\pointxn\|^d$ for $\|\pointxn\| > R$, which implies that $\semialgebraic(\tilde{g}+K)$ is bounded, for $\semialgebraic(K - c ||x||^d)$, which is contained in a ball of radius $(\frac{K}{c})^{\frac{1}{d}}$, is bounded. Clearly, $\semialgebraic(\tilde{g})$ is bounded. If the set $B$
  is compact, then the condition falls into the case of \th\ref{lemma:c}, and thus it suffices to consider the case when $B$ is unbounded. 
  
  \textit{Step 2.} Consider the closed set $ T = \{\pointxn \in B ~|~ f(\pointxn) \le 0\} $. Since $ \semialgebraic(\tilde{g}) $ is bounded, there exists $ \gamma > 0 $ such that $ \tilde{g}(B) \subseteq (-\infty,2\gamma] $.  If there exists $\pointxn \in T$ such that $ \tilde{g}(\pointxn) \in [0,2\gamma] $, then $ \tilde{g}(\pointxn) \ge 0$. In other words, $\pointxn \in \semialgebraic(\tilde{g})$ but $f(\pointxn) \le 0$, which contradicts the fact that $ f> 0$ on $\semialgebraic(\tilde{g})$. Assign $\epsilon < \frac{1}{2} \left|  \max_{\pointxn} \{   \tilde{g}(\pointxn) ~|~ \pointxn \in T\}\right|$. On the one hand, the polynomial $\tilde{g}$ tends to $-\infty$ when $\|\pointxn\| \to \infty$, then for any fixed point $\bar{\p{y}} \in T $, the set $U_{\bar{\p{y}}} := \{\pointxn \in T ~|~ \tilde{g}(\pointxn) \ge \tilde{g}(\bar{\p{y}})\}$ is compact, thus  $\max_{\pointxn} \{\tilde{g}(\pointxn)~|~ \pointxn \in T\}  = \max_{\pointxn} \{\tilde{g}(\pointxn)~|~ \pointxn \in U_{\bar{\p{y}}} \}$; on the other hand, with $\semialgebraic(g) \subseteq \rnum^n \setminus T$ we have $g(\pointxn) < 0$ for any $\pointxn \in T$. Hence, $\max_{\pointxn} \{   \tilde{g}(\pointxn) ~|~ \pointxn \in T\}$ exists and is $< 0$. Then, we have $\tilde{g}(\pointxn) < -2\epsilon$ for $\pointxn \in T $, and thus $\tilde{g}(T) \cap [-2\epsilon,2\gamma ] = \emptyset$. Consider the univariate polynomial
    $ H(t) := (\frac{t-\gamma}{\gamma+\epsilon})^{2N} \cdot t \in \rnum [t] $, where
    $ N \in \mathbb{N} $ is to be fixed below, and let $c(N)$ and $C(N)$ be
    defined as in the proof of \th\ref{lemma:c}. Then, we have that
    $c(N) \geq H(t)\geq 0$ on $[0,2\gamma]$ and $C(N) \leq -H(t)$ on
    $(-\infty, -2\epsilon)$. Define $ h(\varxn) := H(\tilde{g}(\varxn))$. Then
    $h = (\frac{\tilde{g}-\gamma}{\gamma+\epsilon})^{2N} \cdot \tilde{g}
    \in \qm(g)$. 
    
    \textit{Step 3.} Now we prove that $f - h$ is strictly positive on $B$ with a
    proper choice of $N$. Firstly, we assign a real number $A>R>0$ such that the
    compact set $ D:=\{\pointxn \in \rnum^n ~|~ \|\pointxn\| \le A\} \subset \rnum^n $
    satisfies that $\semialgebraic(g) \subset D$ and
    $\tilde{g}(\pointxn) < -c\|\pointxn\|^d < -\gamma - 2\epsilon$ for any
    $\pointxn \in \overline{D}$, where $\overline{D}$ stands for the
    complement of $D$ in $\rnum^n$.

    (1) For any $ \pointxn \in B \cap D $, we use a similar approach to deal
    with it as in the proof of \th\ref{lemma:c}. If $ \tilde{g}(\pointxn) \ge
    -2\epsilon $, then we have $ f(\pointxn) - h(\pointxn) \ge \mu -
    H(\tilde{g}(\pointxn)) \ge \mu - c(N)$, where $\mu$ is as defined in the
    proof of \th\ref{lemma:c}. Otherwise, $ \tilde{g}(\pointxn) < -2\epsilon $, and we have $ f(\pointxn) - h(\pointxn) \ge f(\pointxn) + C(N)$. Since $B \cap D$ is compact, $f$ has a lower bound $L := \min_{\pointxn \in B \cap D} f(\pointxn) $, and we have $ f(\pointxn) - h(\pointxn) \ge L + C(N)$. Since $ c(N) \rightarrow 0 $ and $ C(N) \rightarrow +\infty $ as $ N \rightarrow +\infty $, we know that $ f(\pointxn) - h(\pointxn) > 0 $ for any $ \pointxn \in D $ when 
    $$N > \frac{1}{2}\max\left\{\frac{\log(\mu) - \log(2\gamma)}{\log(\gamma) - \log(\gamma+\epsilon)}, \frac{\log(L + \mu) - \log(2\epsilon)}{\log(\gamma+2\epsilon) - \log(\gamma+\epsilon)}\right\}.$$
    This conclusion follows from the proof of \th\ref{inequality-proof-averkov}.

    (2) For any $\pointxn \in B \cap  \overline{D}$, we have
    \begin{equation*} 
    \label{sec:extens-averk-meth}
        \begin{split}
            f - h
            &= f - \left(\frac{\tilde{g}-\gamma}{\gamma+\epsilon} \right)^{2N} \cdot \tilde{g} = \tilde{g}^{2k} \left(\frac{f}{\tilde{g}^{2k}} - \tilde{g}^{1-2k}(\frac{\tilde{g}-\gamma}{\gamma+\epsilon})^{2N} \right)\\
            &= \tilde{g}^{2k} \left(\frac{f}{\tilde{g}^{2k}} - \tilde{g} \left(\frac{1-\frac{\gamma}{\tilde{g}}}{\gamma+\epsilon} \right)^{2k}(\frac{\tilde{g}-\gamma}{\gamma+\epsilon})^{2N-2k} \right).
        \end{split}
    \end{equation*}
    Suppose that $f$ is in the form $ f = \sum_{\p{\alpha}}\lambda_{\p{\alpha}} \p{x}^{\p{\alpha}}$. Then we have

    \begin{equation*}
        \begin{split}
            \left|\frac{f(\pointxn)}{\tilde{g}^{2k}(\pointxn)}\right| & \le \frac{\sum_{\p{\alpha}} |\lambda_{\p{\alpha}} \pointxn^{\p{\alpha}}|}{c^{2k}\|\pointxn\|^{2d\cdot k}} \le \frac{\sum_{\p{\alpha}} |\lambda_{\p{\alpha}}| \cdot \|\pointxn\|^{|\p{\alpha}|}}{c^{2k}\|\pointxn\|^{2d \cdot k}} \\
            &=\sum_{\p{\alpha}} \frac{|\lambda_{\p{\alpha}}|}{c^{2k}}\cdot \|\pointxn\|^{|\p{\alpha}|-2 d \cdot k} \le \sum_{\p{\alpha}} \frac{|\lambda_{\p{\alpha}}|}{c^{2k}}\cdot A^{|\p{\alpha}| - 2d \cdot k}
        \end{split}
    \end{equation*}
    for any $\pointxn \notin D$ when $2d \cdot k \ge \deg(f)$. Denote $\Lambda := \sum_{\p{\alpha}} \frac{|\lambda_{\p{\alpha}}|}{c^{2k}}\cdot A^{|\p{\alpha}| - 2d \cdot k}$. With $\tilde{g} < -\gamma - 2\epsilon$, we have
    $$\frac{f}{\tilde{g}^{2k}} - \tilde{g} \left(\frac{1-\frac{\gamma}{\tilde{g}}}{\gamma+\epsilon}\right)^{2k} \left(\frac{\tilde{g}-\gamma}{\gamma+\epsilon}\right)^{2N-2k} > - \Lambda + \gamma\left(\frac{1}{\gamma+\epsilon}\right)^{2k}\left(\frac{2\gamma+2\epsilon}{\gamma+\epsilon}\right)^{2N-2k}$$
    when $N > \frac{\log(\Lambda) + 2k \log(\gamma+\epsilon) - \log(\gamma)}{2\log(2)} +k$. Note that the right-hand side of the above inequality is greater than $0$, which implies that $f - h > 0$.
\end{proof}

\begin{remark}\rm
Since the set $B$ in Lemma~\ref{lemma:c} is compact, it is proved that the polynomial obtained $f-h$ is not only strictly positive on $B$ but it also has a strictly positive minimum on $B$. In fact, the polynomial $f-h$ in Theorem~\ref{lemma:d} can also be proven to have a strictly positive minimum on $B$, even though $B$ here is not necessarily compact: in part (2) of the proof, let $\mu = - \Lambda + \gamma\left(\frac{1}{\gamma+\epsilon}\right)^{2k}\left(\frac{2\gamma+2\epsilon}{\gamma+\epsilon}\right)^{2N-2k} > 0$. Then we have $f - h > \mu \tilde{g}^{2k} \to +\infty$ when $\|\p{x}\| \to \infty$. This property of $f-h$ will be used later in combination with Theorem~\ref{avoid_archimedean_poly} in our algorithm to compute the certificates. 
\end{remark}

 If the polynomial $g$ in the theorem statement already satisfies the inequality \eqref{equ:coercive}, we can directly set $\tilde{g} := g$. Specifically, all Archimedean polynomials satisfy this inequality. 

 \th\ref{lemma:d} implicitly indicates that the polynomial with bounded non-negative set is equivalent to an Archimedean polynomial $N - ||\varxn||^2$ in the context of quadratic modules. This equivalence implies that the boundedness of the non-negative set of the polynomial provides a criterion for determining whether a quadratic module is Archimedean in \th\ref{thm:archimedean}. We shall formally establish this connection at the section \ref{sec:criterion}, by proving \th\ref{thm:archimedean} in a constructive manner.

\begin{example}\rm
    Consider the polynomial $f = 9 - x^3 - y^3 + 3xy -5x^2$ and an Archimedean polynomial $g = 1-x^2-y^2$ in $\mathbb{Q}[x,y]$. One can check that $\{(x, y): f(x, y) \le 0\} \cap \{(x, y): g(x, y) \ge 0\} = \emptyset$, and thus $f$ is strictly positive over $\semialgebraic(g)$.

    Let $B$ in \th\ref{lemma:d} be $\rnum^2$ itself and $\tilde{g} = g$. Then, we assign the parameters $\gamma = \frac{2}{3}$, $\epsilon = \frac{1}{6}$, and $N = 2$ to make $f - (\frac{\tilde{g}-\gamma}{\gamma+\epsilon})^{N} \cdot \tilde{g} $ strictly positive on $\rnum^2$: one can check that $\{(x, y): f(x, y) - (\frac{\tilde{g}(x, y)-\gamma}{\gamma+\epsilon})^{N} \cdot \tilde{g}(x, y) \le 0\} = \emptyset$.
\end{example}

The proof of \th\ref{lemma:d} is a constructive one, that is to say, one can explicitly construct the polynomial $h = (\frac{\tilde{g}-\gamma}{\gamma+\epsilon})^{2N} \cdot \tilde{g} \in \qm(g)$ via construction of the intermediate polynomial $\tilde{g}$ and the computation of the involved parameters $\epsilon$, $\gamma$, and $N$. The values of $\epsilon$ and $\gamma$ are the same as in \eqref{eq:aver-para}, but $N$ needs to satisfy two inequalities in the two cases and is dependent on other parameters like $M$, and $A$ in the proof, and thus its computation is a bit complex. But if we endow $f$ in Theorem~\ref{lemma:d} with an additional property to have a lower bound on $B$, then the proof of Theorem~\ref{lemma:d} will become much more simplified in the following two aspects: (i) The explicit construction of $\tilde{g}$ becomes unnecessary. Instead, the polynomial $g$ directly replaces $\tilde{g}$. (ii) The analysis with the set $D$ can be omitted, so is case (2). This is because, in case (1), for any $\pointxn \in B$, when $g(\pointxn) < -2\epsilon$, the finiteness of $L := \min_{\pointxn \in B} f(\pointxn)$ implies $ f(\pointxn) - h(\pointxn) < L + C(N)$ which covers case (2). 

We formalize these observations as the following \th\ref{cor:lowerbound} and
omit its formal proof. In fact, the simplifications we discuss above due to the
existence of the lower bound make the proof of \th\ref{cor:lowerbound} quite
similar to that of \th\ref{lemma:c}: the polynomial $h$ is in the form $h = (\frac{g-\gamma}{\gamma+\epsilon})^{2N} \cdot g$ and the values of all the parameters are the same as in \eqref{eq:aver-para}. 

\begin{theorem}
\th\label{cor:lowerbound}
    Let $g \in \rxn$ be a polynomial with bounded $ \semialgebraic(g) $ and $ f \in \rxn$ be strictly positive on $ \semialgebraic(g) $. Then, for any subset $ B $ of $ \rnum^n $, if $f$ has a lower bound on $B$, there exists a polynomial $ h \in \qm(g) $ such that $ f - h > 0$ over B.
\end{theorem}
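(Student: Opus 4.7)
The plan is to closely mirror the proof of Lemma~\ref{lemma:c} specialized to the single-generator case $s = 1$, with the only substantive change being that the lower bound $L := \inf\{f(\bar{\p{x}}) : \bar{\p{x}} \in B\}$ replaces the quantity $\min\{f(\bar{\p{y}}) : \bar{\p{y}} \in B\}$ that was available for free in Lemma~\ref{lemma:c} thanks to the compactness of $B$. The candidate certificate is $h := \left(\tfrac{g - \gamma}{\gamma + \epsilon}\right)^{2N} g$, which lies in $\qm(g)$ because the prefactor is a square, and $\gamma$, $\epsilon$, $N$ will be taken exactly as in \eqref{eq:aver-para} (for $s = 1$, with $M$ replaced by $-L$).

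First I would set $\gamma := \tfrac{1}{2}\sup\{g(\bar{\p{x}}) : \bar{\p{x}} \in B\}$, which is finite because $\semialgebraic(g)$ is compact (so $g$ attains a finite maximum there and is strictly negative off it). Setting $T := \{\bar{\p{x}} \in B : f(\bar{\p{x}}) \leq 0\}$, the hypothesis $f > 0$ on $\semialgebraic(g)$ forces $T \cap \semialgebraic(g) = \emptyset$ and hence $g < 0$ on $T$; I would then pick $\epsilon > 0$ with $g \leq -2\epsilon$ on $T$, and let $\mu := \inf\{f(\bar{\p{x}}) : \bar{\p{x}} \in B,\ g(\bar{\p{x}}) \geq -2\epsilon\} > 0$. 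These quantities take the roles played by the $\epsilon$ and $\mu$ in the proof of Lemma~\ref{lemma:c}. With them fixed and with $c(N), C(N)$ as defined there, the univariate analysis of $H(t) := t\left(\tfrac{t - \gamma}{\gamma + \epsilon}\right)^{2N}$ yields $H(g(\bar{\p{x}})) \leq c(N)$ when $g(\bar{\p{x}}) \in [-2\epsilon, 2\gamma]$ and $H(g(\bar{\p{x}})) \leq -C(N)$ when $g(\bar{\p{x}}) < -2\epsilon$. In the first case $f - h \geq \mu - c(N)$, and in the second case the lower bound gives $f - h \geq L + C(N)$. Choosing $N$ as in \eqref{eq:aver-para} makes both right-hand sides strictly positive, giving $f - h > 0$ on $B$.

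The crucial simplification compared with Theorem~\ref{lemma:d} is visible in the second case: the lower bound $L$ on $f$ replaces $\min_{\bar{\p{y}} \in B \cap D} f(\bar{\p{y}})$ directly, so there is no need to carve $B$ into a compact piece $B \cap D$ and its unbounded complement, nor to pass to the auxiliary polynomial $\tilde{g} = \|\p{x}\|^2 g$. The main obstacle I anticipate, and the one place where literal substitution into the proof of Lemma~\ref{lemma:c} is not automatic, is establishing the existence of a positive $\epsilon$ with $\sup_T g \leq -2\epsilon$ and the strict positivity of $\mu$ when $T$ and $\{\bar{\p{x}} \in B : g(\bar{\p{x}}) \geq -2\epsilon\}$ are not compact; a semialgebraic argument using the compactness of $\semialgebraic(g)$ together with the continuity of $f$ and $g$ should close this gap, since any violating sequence either has a bounded subsequence that accumulates to a point of $\semialgebraic(g)$ on which $f \leq 0$ (contradicting strict positivity of $f$ on $\semialgebraic(g)$) or escapes to infinity, in which case it is controlled by the finiteness of $L$.
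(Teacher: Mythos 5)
Your route is exactly the paper's intended one: specialize Lemma~\ref{lemma:c} to a single generator, keep $h=\bigl(\frac{g-\gamma}{\gamma+\epsilon}\bigr)^{2N}g$ and the parameter choices of \eqref{eq:aver-para}, and let the hypothesized lower bound $L$ of $f$ on $B$ play the role of $\min_{\p{y}\in B}f(\p{y})$ in the branch $g(\bar{\p{x}})<-2\epsilon$, which is precisely why the set $D$ and case (2) of Theorem~\ref{lemma:d} can be dropped. You also correctly isolate the only place where compactness of $B$ was genuinely used in Lemma~\ref{lemma:c}: the existence of $\epsilon>0$ with $g\le-2\epsilon$ on $T$ and the strict positivity of $\mu$.

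However, the patch you propose for that step does not close it. The bounded-subsequence half is fine, but the claim that a sequence escaping to infinity ``is controlled by the finiteness of $L$'' is unjustified: $L$ only bounds $f$ from below and says nothing about points $\bar{\p{x}}_k\in B$ with $\|\bar{\p{x}}_k\|\to\infty$, $f(\bar{\p{x}}_k)\le 0$ (or $f(\bar{\p{x}}_k)\to 0^{+}$) and $g(\bar{\p{x}}_k)\to 0^{-}$. Boundedness of $\semialgebraic(g)$ alone does not exclude this: take $g=\bigl((xy-1)^2+x^4\bigr)\,(1-x^2-y^2)$, whose non-negative set is the closed unit disk, and $f=(xy-1)^2$, which is $\ge\frac14$ on that disk and $\ge 0$ on $B=\rnum^2$; along $x=1/t$, $y=t$ one has $f=0$ while $g\to 0^{-}$, so $\sup_T g=0$, no admissible $\epsilon$ exists, and for every $\epsilon>0$ the infimum $\mu$ over $\{\bar{\p{x}}\in B:\ g(\bar{\p{x}})\ge-2\epsilon\}$ equals $0$ --- yet the choice of $N$ in \eqref{eq:aver-para} requires $\log(\mu)$. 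What is missing is control of $g$ near infinity, e.g.\ a constant $K>0$ with $g\le-K$ outside a compact set; this is exactly the ingredient supplied (via the claim that $\semialgebraic(g+K)$ is bounded, and the passage to $\tilde g=\|\p{x}\|^2 g$ with a separate treatment of the far region) in the proof of Theorem~\ref{lemma:d}, and the example above shows it is not an automatic consequence of $\semialgebraic(g)$ being bounded, so it must be argued rather than waved away. As written, your definitions of $\epsilon$ and $\mu$, and hence of $N$, are not justified for unbounded $B$, which is a genuine gap at precisely the point you flagged.
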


Comparing \th\ref{lemma:d} and \th\ref{cor:lowerbound}, one can see that the
former is more general, while the latter, with an additional assumption, is
easier to implement. In our method for solving the certificate problem, we
mainly use \th\ref{cor:lowerbound} and below we formulate it into
Algorithm~\ref{alg:ExtAverkov}. In fact, our experiments show that using
\th\ref{cor:lowerbound} by pre-construction of another polynomial with a lower
bound usually leads to lower degrees of the computed certificates than using \th\ref{lemma:d} directly.

\begin{algorithm}[h]
    \caption{Algorithm for extended Averkov's construction \qquad$ \texttt{certificate} := \algExtAverkov(g, f, B) $}  
    \label{alg:ExtAverkov}
    \kwInput{A polynomial $g \in \kxn$ with bounded $\semialgebraic(g)$, a
      polynomial $f \in \kxn$ which has a lower bound on $B$ and is strictly
      positive over $\semialgebraic(g)$, and a set $B \subseteq \mathbb{R}^n$}
    \kwOutput{A sum-of-squares multiplier $\sigma$ in $\kxn$ such
      that $f - \sigma \cdot g > 0$ over $B$}
    
    \uIf{$B$ is empty or $\min_{\pointxn \in B}f(\pointxn) > 0$}
    { 
        \Return $0$\; 
    } 
    \Else
    {
        Choose $\gamma$, $\epsilon$, and $N$ as in \eqref{eq:aver-para}\;
        \Return $ \left(\frac{g - \gamma}{\gamma +\epsilon}\right)^{2N}$\;
    }
\end{algorithm}

\section{Extending Lasserre's method to a monogenic Archimedean qua-dratic module}
\label{sec:lasserre}

\cite{L07a} gave a method to approximate a non-negative polynomial by using sums of squares, and it shows that every real non-negative polynomial $f$ can be approximated as closely as desired by a sequence of polynomials that are sums of squares. This method is extended to work on a monogenic Archimedean quadratic module. Lasserre's result is recalled below first; this is followed by the proposed extension.

\begin{theorem}[{\citet[Theorem 4.1]{L07a}}]
  \th\label{lasserre} Let $f \in \rxn$ be non-negative with global infimum
  $f^{*} = \inf_{\pointxn \in \rnum^n} f(\pointxn)$, that is,
  $f(\pointxn) \geq f^{*} \geq 0$ for any $\pointxn \in \rnum^{n}$. Then for
  every $\epsilon > 0$, there exists some $r(\epsilon,f) \in \mathbb{N}$ such
  that,
    \begin{equation*}
        f_{\epsilon} = f + \epsilon \sum_{k=0}^{r(\epsilon,f)} \sum_{j=1}^{n} \frac{x_{j}^{2k}}{k!} 
    \end{equation*}
    is a sum of squares.
\end{theorem}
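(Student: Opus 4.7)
The plan is to interpret the theorem as a density statement in disguise --- the SOS cone $\sum\rxn^2$ is dense in the cone of polynomials nonnegative on $\rnum^n$ in a carefully weighted topology --- and to realize this density quantitatively through the truncations $\theta_r := \sum_{k=0}^r \sum_{j=1}^n \frac{x_j^{2k}}{k!}$ of the analytic series $\sum_{j=1}^n e^{x_j^2}$. The first observation is essentially trivial: each $\theta_r$ is already a sum of squares, since $\frac{x_j^{2k}}{k!} = \bigl(x_j^k/\sqrt{k!}\bigr)^2$. The substance of the theorem is therefore not the SOS-ness of $\theta_r$ alone, but the assertion that adding a sufficiently large copy of it to the nonnegative-but-possibly-not-SOS polynomial $f$ lifts the sum into the SOS cone.

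Next I would pass to the dual viewpoint. Equip $\rxn$ with the weighted $\ell^1$-norm $\|\sum_\alpha p_\alpha \p{x}^\alpha\|_w := \sum_\alpha |p_\alpha|\, w_\alpha$ for weights such as $w_\alpha = \prod_{j=1}^n 1/\lfloor \alpha_j/2 \rfloor !$, chosen so that (i) the SOS cone is closed in $\|\cdot\|_w$, (ii) the partial sums $\theta_r$ converge as $r \to \infty$, and (iii) every continuous linear functional $L$ on $\rxn$ that is PSD (meaning $L(q^2)\geq 0$ for all $q \in \rxn$) is representable as integration against a positive Borel measure on $\rnum^n$, as a consequence of determinacy of the multivariate Gaussian moment problem in this weighted class. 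Under (iii) any such $L$ automatically satisfies $L(f) \geq 0$ because $f \geq 0$ pointwise; combining with (i) and Hahn--Banach then exhibits $f$ as a $\|\cdot\|_w$-limit of SOS polynomials $f_m$.

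To upgrade this density statement into the explicit assertion $f_\epsilon \in \sum\rxn^2$, I would exploit the fact that for $r$ sufficiently large, $\theta_r$ lies in the interior of the SOS cone in the $\|\cdot\|_w$-topology: a small $\|\cdot\|_w$-ball around $\theta_r$ is contained in $\sum\rxn^2$, since the strictly positive coefficients of $\theta_r$ with factorial decay leave room to absorb small cross-term perturbations in a Gram decomposition. Picking $m$ so that $\|f - f_m\|_w < \epsilon \rho$, where $\rho$ is the radius of this ball, the identity $f + \epsilon \theta_r = f_m + \epsilon\bigl(\theta_r + \epsilon^{-1}(f - f_m)\bigr)$ exhibits $f_\epsilon$ as a sum of two SOS polynomials. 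The principal obstacle is precisely this final interior-point step --- quantitatively proving that $\theta_r$ dominates arbitrary small polynomial perturbations in the SOS cone --- together with the simultaneous calibration of conditions (i)--(iii) in the weighted norm, which is exactly what the factorial denominators $1/k!$ in the statement of the theorem are designed to achieve.
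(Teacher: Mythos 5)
The paper does not prove this result; it is quoted verbatim from Lasserre \cite[Theorem~4.1]{L07a}, so your proposal has to be measured against Lasserre's own argument. Your overall instinct is right in one respect: the engine of Lasserre's proof is indeed a dual/moment-theoretic one (a separation argument combined with the observation that any linear functional nonnegative on squares whose values on $\sum_j\sum_k x_j^{2k}/k!$ are controlled has factorially decaying moments, hence satisfies Carleman's condition and is represented by a measure, whence $L(f)\geq 0$). But as written your proof has two genuine gaps. First, your hypothesis (i), that $\sum\rxn^2$ is closed in the weighted norm, is not only unproved but inconsistent with what you are trying to show: if the SOS cone were $\|\cdot\|_w$-closed, then your own density conclusion ($f$ a $\|\cdot\|_w$-limit of SOS polynomials) would force every globally nonnegative polynomial to be a sum of squares, contradicting Motzkin's example. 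The Hahn--Banach step must separate $f$ from the \emph{closure} of the cone (or, as Lasserre does, work inside the finite-dimensional truncations $\mathbb{R}[\p{x}]_{\leq 2r}$, where the SOS cone genuinely is closed, and run an SDP-duality argument there).

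Second, and more seriously, the final bridging step fails: no polynomial, and in particular no $\theta_r$, is an interior point of the SOS cone in the full ring $\rxn$ under any norm topology, because every $\|\cdot\|_w$-ball around $\theta_r$ contains polynomials involving monomials of degree higher than $2r$ --- for instance $\theta_r-\delta x_1^{2m}$ with $2m>2r$, or $\theta_r+\delta x_1^{2m+1}$ --- which are unbounded below and hence not SOS; with your factorially decaying weights the situation is even worse, since a tiny $\|\cdot\|_w$-norm permits enormous coefficients on high-degree monomials. An interiority argument can only be run inside a fixed finite-dimensional truncation, but your Hahn--Banach approximants $f_m$ come with no degree control, so the two halves of your argument do not connect. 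This is precisely the difficulty Lasserre's proof is organized to avoid: he fixes $r$, considers the finite-dimensional semidefinite program $\inf\{L_y(f): M_r(y)\succeq 0,\ L_y(\theta_r)\ \text{bounded}\}$ and its dual (whose feasible points are identities $f+\lambda\theta_r\in\sum\rxn^2$), proves absence of a duality gap, and then lets $r\to\infty$: the uniform factorial bound on the moments lets him extract a limiting representing measure $\mu$, and $\int f\,d\mu\geq 0$ forces the optimal dual multiplier $\lambda_r$ below any prescribed $\epsilon$ for $r$ large, which is exactly the statement that $f+\epsilon\theta_r$ is SOS. To repair your proposal you would need either this degree-truncated duality scheme or a quantitative version of your perturbation claim confined to $\mathbb{R}[\p{x}]_{\leq 2r}$ together with degree bounds on the approximating SOS sequence; as it stands the ``interior ball around $\theta_r$'' does not exist.
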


Notice that whenever $ r \ge r(\epsilon,f)$, the polynomial  
$$ f + \epsilon \sum_{k=0}^{r} \sum_{j=1}^{n} \frac{x_{j}^{2k}}{k!} = f_{\epsilon} + \epsilon \sum_{k=r(\epsilon,f)+1}^{r} \sum_{j=1}^{n} \frac{x_{j}^{2k}}{k!}$$ 
is also a sum of squares. The parameter $r$ mentioned can be determined by solving a classical semi-definite programming problem. For details of this method, we refer to \citet{L07a}.

Next, we extend this method to a monogenic quadratic module $\qm(g)$ with a bounded $\semialgebraic(g)$. For a positive polynomial $f$, our objective is to identify an element $h \in \qm(g)$ with $\semialgebraic(g)$ bounded such that $f - h$ is a sum of squares over $\rnum^n$. 

\begin{theorem}
  \th\label{avoid_archimedean_poly} Let $f \in \rxn$ be a positive polynomial
  with a global minimum $f^{*} = \inf_{\pointxn \in \rnum^n} f(x) > 0$ and
  $g \in \rxn$ with $\semialgebraic(g)$ bounded. Then there exist
  $\epsilon > 0$ and $r \in \mathbb{N}$ such that
    \begin{equation}
        \label{eq:globalCeri}
        f_{\epsilon} = f - \epsilon \sum_{k=0}^{r} \sum_{j=1}^{n} \frac{x_{j}^{2k}}{k!} \cdot g
    \end{equation}
    is a sum of squares.    
\end{theorem}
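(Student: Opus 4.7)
My plan is to combine the boundedness of $\semialgebraic(g)$ with Lasserre's approximation theorem (Theorem~\ref{lasserre}). For brevity, write $Q_{r} := \sum_{k=0}^{r} \sum_{j=1}^{n} \frac{x_{j}^{2k}}{k!}$; this polynomial is a sum of squares and satisfies $Q_{r} \geq n$ globally (from the $k=0$ contribution).

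First, I would fix an arbitrary $r \in \mathbb{N}$ and choose $\epsilon > 0$ small enough so that $F := f - \epsilon g Q_{r}$ is non-negative on $\mathbb{R}^{n}$ with a strictly positive global infimum. The argument splits by region: on the compact set $\semialgebraic(g)$, both $g$ and $Q_{r}$ attain finite maxima, so for $\epsilon < f^{*} / \bigl(2 \max_{\semialgebraic(g)} (g Q_{r}) \bigr)$ we have $F > f^{*}/2 > 0$; on the complement $\mathbb{R}^{n} \setminus \semialgebraic(g)$, the inequalities $g < 0$ and $Q_{r} > 0$ give $-\epsilon g Q_{r} > 0$, hence $F \geq f \geq f^{*} > 0$. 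Consequently $F$ is non-negative with positive lower bound, so Lasserre's hypothesis is satisfied.

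Second, I would apply Lasserre's theorem to $F$: for any $\delta > 0$ there exists $r' \in \mathbb{N}$ such that $F + \delta Q_{r'}$ is a sum of squares. Enlarging $r$ to $\max\{r,r'\}$ if necessary, and using that $Q_{r} - Q_{r'} = \sum_{k=r'+1}^{r}\sum_{j=1}^{n} x_{j}^{2k}/k!$ is itself a sum of squares for $r \geq r'$, I may take $r' = r$ and conclude that
\[
F + \delta Q_{r} \;=\; f - \epsilon g Q_{r} + \delta Q_{r} \;=\; f - (\epsilon g - \delta) Q_{r}
\]
is a sum of squares.

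The main obstacle I anticipate is reconciling the form $f - (\epsilon g - \delta) Q_{r}$ produced above with the exact form $f - \epsilon' g Q_{r}$ required by the theorem, since the additive constant $-\delta$ does not naturally fold into the multiplier $\epsilon g$ of $g$. I expect the resolution will require choosing $\epsilon$, $\delta$, and $r$ jointly rather than sequentially, possibly combined with a re-application of the extended Averkov construction (Theorem~\ref{cor:lowerbound}) to the residual polynomial, so that the shift $-\delta$ can be absorbed into a rescaled multiplier of $g$ by exploiting the uniform upper bound on $g$ coming from the compactness of $\semialgebraic(g)$ together with the freedom to let $r$ be arbitrarily large in Lasserre's theorem. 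Carrying out this coordinated parameter selection, and verifying that the resulting sum-of-squares representation has exactly the shape \eqref{eq:globalCeri}, is the principal technical hurdle.
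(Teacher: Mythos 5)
Your opening step (making $F=f-\epsilon gQ_r$ positive by treating $\semialgebraic(g)$ and its complement separately) is sound in outline, but the proof does not reach the stated conclusion, and you say so yourself: what you obtain is that $f-(\epsilon g-\delta)Q_r$ is a sum of squares, which is not of the form \eqref{eq:globalCeri}, and the ``coordinated parameter selection'' you hope will repair this is left entirely open (and re-applying Theorem~\ref{cor:lowerbound} would not produce the single-term shape $\epsilon g\,p_r$ either). The missing idea is a \emph{constant shift of $g$}, not a rescaling of its multiplier. As in the first paragraph of the proof of Theorem~\ref{lemma:d}, choose $K>0$ with $\semialgebraic(g+K)$ still bounded, say contained in $\{\|\p{x}\|^2\le R\}$, and write, with $p_r:=\sum_{k=0}^{r}\sum_{j=1}^{n}x_j^{2k}/k!$,
\[
f-\epsilon g\,p_r \;=\; \bigl(f-\epsilon (g+K)\,p_r\bigr)\;+\;\epsilon K\,p_r .
\]
On $\{\|\p{x}\|^2\le R\}$ one has $p_r\le ne^{R}$ \emph{uniformly in $r$}, and outside that ball $g+K\le 0$; hence with $M=\max_{\p{x}}(g+K)$ and $\epsilon=f^{*}/(2Mne^{R})$ the polynomial $h:=f-\epsilon(g+K)p_r$ satisfies $h\ge f-f^{*}/2\ge f^{*}/2>0$ for every $r$. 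Applying Theorem~\ref{lasserre} to $h$ with perturbation constant $\epsilon K$ then produces exactly the missing term $\epsilon K\,p_r$, so the sum is literally $f-\epsilon g\,p_r$. In other words, the additive constant $\delta$ you could not absorb is realized as $\epsilon K$ and is folded into the multiplier of $g$ through $g=(g+K)-K$; this is the step your proposal lacks.

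A second, related defect is the circularity in your parameter choices: your $\epsilon$ is chosen against $\max_{\semialgebraic(g)}(gQ_r)$, which grows with $r$, while Lasserre's index $r'$ depends on $F$, hence on $r$ and $\epsilon$; so ``enlarging $r$ to $\max\{r,r'\}$'' changes $F$ and invalidates both earlier choices, and nothing in your argument breaks this loop. The point of the uniform bound $p_r\le ne^{R}$ on the compact region where $g+K\ge 0$ is precisely that $\epsilon$ can be fixed \emph{independently of $r$}, so that the only remaining index to choose is the Lasserre degree $r$ itself (and, by the remark following Theorem~\ref{lasserre}, any larger $r$ still works). Without decoupling $\epsilon$ from $r$, even the weaker statement you derive is not actually established by the steps as written.
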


\begin{proof}
  Denote the sum of squares
  $\sum_{k=0}^{r} \sum_{j=1}^{n} \frac{x_{j}^{2k}}{k!} $ by
  $p_r(\varxn)$. Consider $\tilde{g} $ in the proof of \th\ref{lemma:d}, then we show there that for any $K>0$, $\semialgebraic(\tilde{g}+K)$ is bounded, i.e. there
  exists $R>0$ such that
  $\semialgebraic(\tilde{g}+K) \subseteq \{\pointxn~|~ \|\pointxn\|^2 \le
  R\}$. Then, for any $\pointxn$ such that $\|\pointxn\|^2 \le R$, we have
  $p_{r}(\pointxn) \le \sum_{k=0}^{r} \sum_{j=1}^{n} \frac{R^k}{k!} \le ne^R$
  for any $r>0$.
    
    For the sake of simplicity, let $K=1$. Consider the polynomial 
    $$f_{\epsilon} := f - \epsilon p_{r} \cdot \tilde{g} = (f-\epsilon(\tilde{g}+1)p_r) + \epsilon  p_r.$$ 
    Let $M = \max_{\pointxn} (\tilde{g}) + 1$, which is finite, and assign $\epsilon = \frac{f^{*}}{2Mne^R} $. Then we have $h :=f - \epsilon(\tilde{g}+1)p_r \ge f - \frac{f^{*}}{2} > 0$. Applying \th\ref{lasserre} to $h$ with $\epsilon$, we know that there exists some $r(\epsilon, h) \in \mathbb{N}$ such that $h + \epsilon p_{r(\epsilon, h)} = f_\epsilon$ is a sum of squares.
\end{proof}

This constructive proof immediately translates to an algorithmic procedure we formulate as Algorithm~\ref{alg:Lasserre}. Instead of computing $r$ by solving a SDP problem in the algorithm, one can simply increase the value of $r$ iteratively and check whether the resulting $f_{\epsilon} = f - \epsilon p_{r} \cdot \tilde{g}$ becomes a sum of squares. 

\begin{algorithm}[ht]
    \caption{Algorithm for finding certificate of globally strictly positive polynomials \qquad $ \texttt{certificate} := \algLasserre(f,g) $}  
    \label{alg:Lasserre}
    \kwInput{A polynomial $g \in \kxn$ with bounded $\semialgebraic(g)$, a polynomial $f \in \kxn$ such that $f > 0$ over $\rnum^n$}
    \kwOutput{A sum-of-squares multiplier $\sigma$ in $\kxn$ such that $f - \sigma \cdot g $ is a sum of squares} 
    {
        \uIf{$f$ is a sum of squares}
        {
            \Return $0$\; 
        } 
        \Else
        {
            Choose $R$ in proof of \th\ref{avoid_archimedean_poly}\; 
            \uIf{$\semialgebraic(g+1)$ is not bounded}
            {
                $c = \|\varxn\|^{2\lfloor \frac{(d-1)^n}{2}  \rfloor+2} $; 
            }
            \Else
            {
                $c = 1$;
            }
             $f^{*} = \min_{\pointxn} f$; $M = \max_{\pointxn}(c \cdot g) + 1$\; $\epsilon = \frac{f^{*}}{2Mne^R}$\; Compute $r$ by solving a SDP problem \;
            \Return $ c \cdot\epsilon \sum_{k=0}^{r} \sum_{j=1}^{n} \frac{x_{j}^{2k}}{k!}  $\;
        }
    }
\end{algorithm}

\begin{example} \rm
    Consider the polynomials $g = 1 - x^4 y^2 - x^2 y^4 + x^2 y^2 - y^6 - x^6$ and $f = 2 + x^4 y^2 + x^2 y^4 - 3x^2 y^2$. The latter is the Motzkin's polynomial plus 1, which is strictly positive over $\rnum^{2}$. Using the technique in \citet[Lemma~1]{B87t}, one can show that $f$ is not a sum of squares.The minimum value of $f$ is $1$ and the maximum value of $g + 1$ is $\frac{217}{108}$. Then the value of $\epsilon$ is $\frac{27}{217 e^3}$. We start with $r = 0$: now $p_r = n $ and $h = f - \epsilon(g + 1)n$, and in fact $f_{\epsilon} = h + \epsilon n$ is verified to be a sum of squares. 
\end{example}

\section{A constructive proof of Putinar's criterion}
\label{sec:criterion}
With the necessary extensions now in place to address the certificate problem, we apply the constructive approach here to prove \th\ref{thm:archimedean}, by computing the certificate of Archimedean polynomial in terms of $\qm(g)$ with bounded $g$. 

\begin{proof}[Proof of \th\ref{thm:archimedean}]
  The implication ($\Rightarrow$) is trivial, now we prove the other direction $(\Leftarrow)$. Since $g \in \qm(G)$ with bounded
  $\semialgebraic(g)$, there exists $R > 0 $ such that
  $\semialgebraic(g) \subseteq \{\pointxn \in \rnum^n ~|~ \|\pointxn\| <R\}$. Then we know that the
  Archimedean polynomial $f := R^2 - ||\varxn||^2 > 0 $ over
  $\semialgebraic(g)$. By \th\ref{lemma:d}, there exists
  $h \in \qm(g) \subseteq \qm(G)$ such that $\tilde{f} = f - h > 0$ over
  $\rnum^n$ with strictly positive global minimum. Then, applying \th\ref{avoid_archimedean_poly} to $\tilde{f}$, we
  know that 
  $\tilde{f}_{\epsilon} := \tilde{f} - \epsilon p_r \cdot g$ is a sum of squares for some $\epsilon> 0$ and $r \in \mathbb{N}$, where $p_r$ is the sum of squares
  $\sum_{k=0}^{r} \sum_{j=1}^{n} \frac{x_{j}^{2k}}{k!} $.
Now we have \begin{equation}\label{eq:certificate}
        f = (h+\epsilon p_r) \cdot g + \tilde{f}_\epsilon . 
    \end{equation}
    Note that $\epsilon p_r$ is also a sum of squares in
    $\rxn$. Therefore, \eqref{eq:certificate} presents the certificate of
    $R^2 - ||\varxn||^2$ in term of $\qm(G)$, and thus the quadratic module
    $\qm(G)$ is Archimedean.
\end{proof}

\section{Computing certificates using the extended Averkov construction and Lasserre's method}
\label{sec:cert}

In this section, the extended Averkov construction and Lasserre's method are integrated to compute certificates in the general case. There are two issues that need to be addressed; this is discussed in the next subsection. An algorithm to compute certificates is given in the subsection to follow.

\subsection{Constructing a bounded polynomial with a global lower bound for applying the extended Averkov's method}

Two issues need to be addressed to apply the extended Averkov's method. Per \th\ref{cor:lowerbound}, a polynomial $g \in \qm(G)$ with a bounded $\semialgebraic(g)$ must be constructed; further, its global lower bound needs to be computed.

The univariate and multivariate cases are handled separately for computing the above $g$. If there exists a generator $g_i \in G$ such that $\semialgebraic(g_i) $ is bounded, then use $g_i $ as $g$. Otherwise, all $\semialgebraic(g_1), \ldots, \semialgebraic(g_s) $ are unbounded, and in the multivariate case we assign $g$ to be the Archimedean polynomial $g := N - ||\varxn||^2$  with a proper choice of $N$ such that $f > 0$ on $\semialgebraic(g)$. In the univariate case, however, there must exist two generators $ g_i $ and $ g_j ~(i \ne j) $ such that the leading coefficients $ \lc(g_i) $ and $ \lc(g_j) $ have opposite signs and the degrees of $ g_i $ and $ g_j $ are odd. Let $g := c_i \cdot g_i + c_j \cdot g_j$ where 
\begin{equation}\label{eq:1gen}
    \left\{
        \begin{array}{ll}
            c_i := \frac{1}{|\lc(g_i)|}, c_j := \frac{(x-a)^{\deg(g_i)-\deg(g_j)}}{|\lc(g_j)|},  & \mbox{if~} \deg(g_i) > \deg(g_j), \\
            c_i := \frac{x^2}{|\lc(g_i)|}, c_j := \frac{(x-a)^2}{|\lc(g_j)|}, & \mbox{if~} \deg(g_i) = \deg(g_j),
        \end{array}
    \right.
\end{equation}
The constant $ a $ is selected such that $ \lc(g) < 0 $. One can see that the choice of $c_i, c_j$ forces the cancellation of the leading odd terms of $ g_i $ and $g_j$, hence $\deg(g)$ is even. Clearly, we have $ \semialgebraic(G) \subseteq \semialgebraic(g) $ and $ g \in \qm(G) $. 

The next step is to construct a new polynomial $ \tilde{f} : = f - \sigma \cdot g$
such that $\tilde{f} > 0$ on $\semialgebraic(g)$, where $\sigma$ is the
certificate returned by $\algAverkov(\{g\}, f, \semialgebraic(g))$. At this
step, $\tilde{f}$ and $g$ as constructed above satisfy the conditions of
\th\ref{lemma:d}; however, $\tilde{f}$ still misses a critical property of possessing a lower bound, in particular a global lower bound if $B$ in \th\ref{cor:lowerbound} is $\rnum^n$. The following proposition addresses this problem.

\begin{proposition}\th\label{prop:lowerBound}
    Let $ f,g \in \rxn$ be two polynomials with $\semialgebraic(g)$ bounded and $ f > 0 $ on $\semialgebraic(g)$. Then there exists a sum of squares $\delta$ such that $f - \delta \cdot g$ is strictly positive on $\semialgebraic(g)$ and has a lower bound over $\rnum^n$.
\end{proposition}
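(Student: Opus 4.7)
The plan is to build $\delta$ as a small positive multiple of a sum of squares whose asymptotic growth strictly exceeds that of $f$, so that $-\delta g$ (which is positive where $g$ is bounded away from $0$ on the negative side, i.e.\ for $\|\p{x}\|$ large) compensates for any downward growth of $f$ at infinity, while the multiplier is kept small enough on the compact set $\semialgebraic(g)$ to preserve $f - \delta g > 0$ there.

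First, I would record the standard compactness consequences: $\semialgebraic(g)$ is closed and bounded, hence compact, so $f^{*} := \min_{\semialgebraic(g)}f > 0$, $G_{\max} := \max_{\semialgebraic(g)}g \ge 0$, and $\semialgebraic(g) \subseteq \{\bar{\p{x}} : \|\bar{\p{x}}\| \le R_{0}\}$ for some $R_{0} > 0$. Next I would replay the part of the proof of Theorem~\ref{lemma:d} that shows $\semialgebraic(g+K)$ is bounded for a suitable $K > 0$, thereby producing an $A \ge R_{0}$ such that $g(\bar{\p{x}}) < -K$ for every $\|\bar{\p{x}}\| > A$.

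With these constants in hand, fix an integer $k$ with $2k > \deg f$ and try the ansatz
\[
    \delta := \lambda\left(1 + \|\p{x}\|^{2}\right)^{k},
\]
where $\lambda > 0$ is to be chosen. This $\delta$ is a sum of squares because $(1 + \|\p{x}\|^{2})^{k}$ is an integer power of an SOS and $\lambda = (\sqrt{\lambda})^{2}$. On $\semialgebraic(g)$ one has $\delta g \le \lambda(1+R_{0}^{2})^{k}G_{\max}$, so taking $\lambda$ small enough (the constraint is vacuous when $G_{\max}=0$, since then $g \equiv 0$ on $\semialgebraic(g)$) gives $f - \delta g \ge f^{*} - \lambda(1+R_{0}^{2})^{k}G_{\max} > 0$ on $\semialgebraic(g)$. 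To establish the lower bound over $\rnum^{n}$, I would split $\rnum^{n} = \{\|\p{x}\| \le A\} \cup \{\|\p{x}\| > A\}$: on the compact ball, $f - \delta g$ is continuous and thus bounded below; on the complementary region, $-\delta g \ge K\lambda(1+\|\p{x}\|^{2})^{k}$, and since $|f(\bar{\p{x}})| = O(\|\bar{\p{x}}\|^{\deg f})$ at infinity while $2k > \deg f$, I obtain $f - \delta g \to +\infty$, which is in particular bounded below.

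The step I expect to require the most care is the asymptotic comparison on $\{\|\p{x}\| > A\}$: one needs a uniform bound of the form $|f(\bar{\p{x}})| \le C\|\bar{\p{x}}\|^{\deg f}$ for $\|\bar{\p{x}}\| \ge 1$, obtained by the same kind of coefficient-based estimate already used in part~(2) of the proof of Theorem~\ref{lemma:d}, followed by a routine degree comparison against $K\lambda(1+\|\p{x}\|^{2})^{k}$. The two constraints on $\lambda$ (small enough to ensure positivity on the compact $\semialgebraic(g)$, any positive value to secure asymptotic dominance) act on disjoint regimes and are therefore simultaneously satisfiable, which makes the construction consistent and yields the desired $\delta$.
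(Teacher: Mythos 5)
Your proof is correct and follows essentially the same route as the paper's: take $\delta$ to be a sum of squares of degree strictly larger than $\deg f$, scaled small enough that $f-\delta g$ remains positive on the compact set $\semialgebraic(g)$, and obtain the global lower bound from the constant $K$ in the proof of Theorem~\ref{lemma:d} (so $g<-K$ outside a ball) together with a degree comparison at infinity. The only difference is cosmetic: you use $\delta=\lambda(1+\|\p{x}\|^{2})^{k}$ with $\lambda$ fixed via $\min_{\semialgebraic(g)}f$ and $\max_{\semialgebraic(g)}g$, while the paper takes $\delta=c\|\p{x}\|^{2\lfloor m/2\rfloor+2}$ with $c$ defined by a minimized quotient; your choice even avoids dividing by quantities that may vanish on $\semialgebraic(g)$.
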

    
\begin{proof}
    Let $m= \deg(f) + (\deg(g) - 1)^n - \deg(g)$ and consider the sum of squares
    \begin{equation}\label{eq:lowerBound}
      \delta = c \|\pointxn\|^{2\lfloor \frac{m}{2} \rfloor + 2}, \mbox{ where }c < \frac{1}{2} \min_{\pointxn \in \semialgebraic(g)}\frac{f(\pointxn)}{\|\pointxn\|^{2\lfloor \frac{m}{2} \rfloor + 2} g(\pointxn)}.
    \end{equation}
    Next, we prove that $\bar{f} := f - \delta \cdot g$ is strictly positive on $\semialgebraic(g)$ and has a lower bound over $\rnum^n$. Suppose that $f$ is of the form $f = \sum_{\p{\alpha}}\lambda_{\p{\alpha}} \p{x}^{\p{\alpha}}$ with $\lambda_{\p{\alpha}}  \in \rnum$. Then for any $\pointxn \in \semialgebraic(g)$, we have $\frac{f(\pointxn)}{\|\pointxn\|^{2\lfloor \frac{m}{2} \rfloor + 2} g(\pointxn)} > c $ and thus we have $ \bar{f}(\pointxn) = f(\pointxn) - \delta(\pointxn)g(\pointxn) > 0$. 
    
    Analogously, in the first two paragraphs of the proof of \th\ref{lemma:d}, there exist $l > 0$ and $R \ge 1$ such that $ |  \|\pointxn\|^{(\deg(g)-1)^n} g(\pointxn) |\ge l \|\pointxn\|^{\deg(g)}$ for $\|\pointxn\| > R$. Then for any $ \|\pointxn\| > R $, we have 
    \begin{equation*}
        \begin{split}
            | \|\pointxn\|^{2\lfloor \frac{m}{2} \rfloor + 2} g(\pointxn) | \ge | \|\pointxn||^{m+1} g(\pointxn)| \ge
            l \|\pointxn\|^{\deg(g) - (\deg(g)-1)^n + m+1  } = l\|\pointxn\|^{\deg(f)+1},
        \end{split}
    \end{equation*}
    and $|f|  \le \sum_{\p{\alpha}} |\lambda_{\ p{\alpha}} \p{x}^\alpha| \le \sum_{\p{\alpha}} |\lambda_{\p{\alpha}}|\cdot \|\pointxn\|^{|\p{\alpha}|}$. 
    
    Now consider the rational function 
    $$\left|\frac{f}{\delta \cdot g}\right| \le \left|\frac{f}{c\|\pointxn\|^{2\lfloor
          \frac{m}{2} \rfloor + 2} \cdot g}\right| \le \sum_{\p{\alpha}}
    \frac{|\lambda_{\p{\alpha}}|}{c\cdot l} \|\pointxn\|^{|\p{\alpha}| - \deg(f) - 1}.$$
    Note that $|\p{\alpha}| - \deg(f) - 1 < 0 $ for any $\p{\alpha}$. When $\|\pointxn\| \to \infty $, we have $\|\pointxn\|^{|\p{\alpha}| - \deg(f) - 1} \to 0$, and thus there exists $M> 0$ such that $|\frac{f}{\delta \cdot g}| \le 1$ for any $\pointxn$ such that $\|\pointxn\| \ge M$. In other words, for any $\pointxn \in \overline{\semialgebraic(g)} \cap \{\pointxn~|~\|\pointxn\| \ge M\} $, we have $\frac{f}{\delta \cdot g}(\pointxn) - 1 < 0$. Note that $\pointxn \not \in \semialgebraic(g)$, and thus $g(\pointxn) < 0$. Then we have $ \bar{f}(\pointxn) = f(\pointxn) - \delta(\pointxn)g(\pointxn) > 0$. This indicates that the set $\semialgebraic(-\bar{f})$ is contained in $\{\|\pointxn\| \ge M\}$ and bounded, and thus $\bar{f}$ has a lower bound over $\rnum^n$.
\end{proof}

\subsection{Algorithm for solving the certificate problem}

Integrating the extended Averkov construction and Lasserre's method, the result is the following algorithm for computing the certificates in Archime-dean quadratic modules, which are later formulated as Algorithm~\ref{alg:overall}. 

\begin{enumerate}
    \item Find or construct $g \in \quadmod(G)$ such that $\semialgebraic(g)$ is bounded, together with its certificates w.r.t. $\quadmod(G)$.

    \item Apply Averkov's method (Algorithm~\ref{sec:an-algor-descr}) for the subset $B = \semialgebraic(g)$ to construct a polynomial $\tilde{g} \in \quadmod(G)$, together with its certificates w.r.t. $\quadmod(G)$, such that $\tilde{f} := f - \tilde{g} > 0$ over $\semialgebraic(g)$.

    \item If $\tilde{f}$ does not have a lower bound over $\mathbb{R}^n$
      (otherwise we can skip the construction here and set $\hat{f} =
      \tilde{f}$), construct a sum of squares $\delta$ as in \eqref{eq:lowerBound}
      such that, as shown by \th\ref{prop:lowerBound}, $\hat{f} := \tilde{f} - \delta \cdot g \in \quadmod(g)$ is strictly positive on $\semialgebraic(g)$ and has a lower bound over $\rnum^n$. 

    \item Applying \th\ref{cor:lowerbound}~(Algorithm~\ref{alg:ExtAverkov}) to $\hat{f}$ for $B = \rnum^n$, construct a polynomial $\hat{g} \in \quadmod(G)$ such that $\hat{f} - \hat{g} > 0$ over $\mathbb{R}^n$. 
 
    \item In the univariate case when $n=1$, $\hat{f} - \hat{g}$ is already a sum of squares itself and we have
    \begin{equation*} 
    \label{item:otherw-find-posit}
        f = \underbrace{\hat{f}-\hat{g}}_{\text{sum of squares}} + \underbrace{\hat{g} + \delta \cdot g + \tilde{g}}_{\in \quadmod(g)}  \in \quadmod(G).
    \end{equation*}
    Combining the certificates of $\hat{g}$, $g$, and $\tilde{g}$ w.r.t. $\qm(G)$ will give those of $f$ w.r.t. $\qm(G)$. 

    \item Otherwise in the multivariate case, we apply the extended Lasserre's method  (Algorithm~\ref{alg:Lasserre}) to $\hat{f}$ to construct $\epsilon$ and $p_r$ such that $\hat{f} - \epsilon p_r \hat{g}$ is a sum of squares. In this case,
    \begin{equation*} 
    \label{item:otherw-find-posit-other}
        f = \underbrace{\hat{f}-\epsilon p_r \cdot \hat{g}}_{\text{sum of squares}} + \underbrace{\epsilon p_r \cdot \hat{g} + \delta \cdot g + \tilde{g}}_{\in \quadmod(g)}  \in \quadmod(G),
    \end{equation*}
    and the cerficates of $f$ w.r.t. $\qm(G)$ also follow from combination of those of $\hat{g}$, $g$, and $\tilde{g}$ w.r.t. $\qm(G)$.
\end{enumerate}

\begin{algorithm}[!htp]
    \caption{Algorithm for solving the certificate problem for strictly positive polynomials \quad $  \texttt{certificates} := \algPutinar(G, f) $}
    \label{alg:overall}
    \kwInput{A polynomial set $G = \set{g_1, \dots, g_s} \subseteq \kxn$ and a polynomial $f \in \kxn$ such that $f > 0$ over $\semialgebraic(G)$}  
    \kwOutput{A sequence $(\sigma_0, \sigma_1, \dots, \sigma_{s+1})$ of sum-of-squares multipliers such that $f = \sigma_0 + \sum_{i=1}^{s}{\sigma_i \cdot g_i} + \sigma_{s+1} \cdot (N - ||\varxn||^2)$} 

    \For{$i=1, \ldots, s+1$}
    {
        $\sigma_i := 0$; $c_i := 0$\;
    }
    \eIf{$\exists g_i \in G \mbox{ such that } \semialgebraic(g_i)$ is bounded}
    {
        $g := g_i; c_i := 1$\; 
    }
    {
        \eIf{$n > 1$}
        {
            $g := N - ||\varxn||^2$; $c_{s+1} := 1$\;
        }
        {
            Find $g_i$ and $g_j$ of odd degrees and leading coefficients of opposite signs\;
            Construct $c_i$ and $c_j$ as in \eqref{eq:1gen}\;
            $g := c_i \cdot g_i + c_j \cdot g_j$\;
        }
    }   

    $(\sigma_1, \ldots, \sigma_s) := \algAverkov(G, f, \semialgebraic(g))$\;
    $\tilde{f} := f - \sum_{i=1}^s \sigma _i g_i$\;
    \uIf{$\tilde{f}$ does not have a global lower bound\label{line:noLower}}
    {
        Construct $ \delta $ as in \eqref{eq:lowerBound};
        $\tilde{f} := \tilde{f} - \delta \cdot g$\label{line:delta}\;
    }
    $p := \algExtAverkov(g, \tilde{f}, \mathbb{R}^n)$\label{line:extAverkov}\;

    \eIf{$n = 1$}
    {
        $\sigma_i := \sigma_i + (p + \delta) c_i$; $\sigma_j := \sigma_j + (p + \delta) c_j$\;
        $\sigma_0 := f - \sum_{i=1}^{s}{\sigma_i \cdot g_i}$\;
    }
    { 
        $ q := \algLasserre(\tilde{f} - p \cdot g,g)$ \;
        \eIf{$c_{s+1} = 1$}
        {
            $ \sigma_{s+1} = (q+p+\delta)c_{s+1} $ \;
        }
        {
            $\sigma_i = \sigma_i + (p+q+\delta)c_i$\;
        }
            $\sigma_0 := f - \sum_{i=1}^{s+1}{\sigma_i \cdot g_i}$\;
    }

    \Return $(\sigma_0, \sigma_1, \dots, \sigma_{s+1})$\;
  
\end{algorithm}

\begin{example} \label{sec:putinar-cert-problem_example} 
\rm

Let us illustrate Algorithm~\ref{alg:overall} with the univariate quadratic module $\qm(g_1,g_2)$, where 
$$g_1 =x(x - \frac{1}{2})(x - 1)^2(x - 2), \qquad g_2=-x(x - 1)(x - 2).$$
Then, we have $\semialgebraic(g_1,g_2) = \{0,1,2\}$. The polynomial 
$$f = -26x^7 + 13x^6 + 87x^5 + 49x^4 - 464x^3 + 1512x^2 - 2211x + 1050$$
is strictly positive on $ \semialgebraic(g_1,g_2)$, and thus $f \in \qm(g_1,g_2)$ by Putinar's Positivstellensatz.

With neither $\semialgebraic(g_1)$ nor $\semialgebraic(g_2)$ bounded, we construct 
$$ g :=  g_1 + x^2 \cdot g_2  = -\frac{1}{2}x(x - 1)(x - 2)(3x - 1)$$ 
in $\qm(g_1,g_2) $ using \eqref{eq:1gen}, and now we have $ \semialgebraic(g) = [0,\frac{1}{3}] \cup [1,2]$. Since $ f > 0 $ on $ \semialgebraic(g) $, we can skip the Averkov's method (\th\ref{lemma:c}). Now, construct the polynomial $\delta = 20 x^4$ by using \eqref{eq:lowerBound} in line~\ref{line:delta} of Algorithm~\ref{alg:overall}. The polynomial $\tilde{f} = f - \delta \cdot g$ has a global lower bound $ -159 $. In line~\ref{line:extAverkov}, by using Algorithm~\ref{alg:ExtAverkov}, we have $ \bar{f} := \tilde{f} - \sigma \cdot g > 0 $ on $\rnum$ so it is a sum of squares in $\mathbb{Q}[x]$, where
$$ \sigma = \left(\frac{25x(x-1)(x-2)(3x-1)}{29} +  \frac{19}{29} \right)^{22}.$$  
Consequently, Algorithm~\ref{alg:overall} terminates with the certificate $f = (\delta+\sigma) \cdot g_1 + (\delta+\sigma) x^2 \cdot g_2 + \tilde{f}$. 

To provide a comparison with the alternative method that does not construct the polynomial $\tilde{f}$ with a global lower bound in lines~\ref{line:noLower}--\ref{line:delta} but uses \th\ref{lemma:d} directly, we present the certificates computed by this alternative method. We have $ \bar{f} := f - \bar{\sigma} \cdot g > 0 $ on $\rnum$ so it is a sum of squares in $\kx$, where
$$ \bar{\sigma} = \left(\frac{50x(x-1)(x-2)(3x-1)}{43} +  \frac{38}{43} \right)^{52}.$$ 
Finally, writing $g$ in terms of $\{ g_1,g_2\}$, we have the certificates of $ f $ w.r.t. $\{ g_1,g_2\}$: $ f = \sigma \cdot g_1 + \sigma x^2 \cdot g_2 + \bar{f}$ and the certificates computed in this way have higher degrees than those by Algorithm~\ref{alg:overall}. 
\end{example}

\begin{example}
    \rm
    Let us consider the another example in \citet[Lemma 18]{S25a}. Let the quadratic module $\qm(G) = \qm(g_1,g_2,g_3,g_4) \subseteq \mathbb{Q}[x,y]$, where $g_1 = x, g_2 = y, g_3 = (1-x)(1-y) \text{ and } g_4 = 2 - (x + y).$ We have $\semialgebraic(G) = [0,1]^2$. Let $$ f = (x + 1)(2 - x) + (y + 1)(2 - y), $$ which is strictly positive on $\semialgebraic(G)$, and thus $ f \in \qm(G)$ by Putinar's Positivstellensatz.

    Now consider the first step to construct the polynomial $g$. Since $ 2 - x = g_4 + g_1 \in \qm(G)$, we have $(2-x)x = \frac{x^2}{2} (2-x) + 2(1-\frac{x}{2})^2 x \in \qm(G)$. Then $ 4 - x^2 = 2(2-x) + (2-x)x \in \qm(G)$. Similarly, we have $ 4 - y^2 \in \qm(G)$. To sum up, we assign $ g =  8 - x^2 - y^2 = (2+\frac{x^2}{2}+2(1-\frac{x}{2})^2) \cdot g_1 +(2+\frac{y^2}{2}+2(1-\frac{y}{2})^2) \cdot g_2+(4 + \frac{x^2}{2} + \frac{y^2}{2}) \cdot g_4 \in \qm(G)$, which is an Archimedean polynomial.

    Then, for the subset $B = \semialgebraic(g) = \{\|\pointxn\| \le 2\sqrt{2}\}$ we construct sums of squares $\sigma_i~ (i =1, \ldots,4)$ by using Averkov's method, such that $\tilde{f} = f - \sum_{i=1}^{4}\sigma_i \cdot g_i > 0 $ over $B$, where $$ \sigma_i = \left( \frac{10 }{51}\cdot g_i - \frac{50}{51}\right)^{4}.$$ 

    Since $\tilde{f} $ has no lower bound, we construct the polynomial $\delta = \frac{(x^2+y^2)^5}{6000}$ by using \eqref{eq:lowerBound} in line~\ref{line:delta} of Algorithm~\ref{alg:overall}. The polynomial $\tilde{f} = \tilde{f} - \delta \cdot g$ is already sums of squares, hence the algorithm terminates. Finally, we have the certificate of $f$ in $\qm(G)$: $f = \sum_{i=1}^4 \sigma_i \cdot g_i + \delta \cdot g + \tilde{f}$.

    In general, the polynomial $\tilde{f}$ at line~\ref{line:delta} is not sums of squares. For example, when construct $\delta = \frac{(x^2+y^2)^5}{40000}$, the polynomial $\tilde{f}$ has a global lower bound of $-6$. In line~\ref{line:extAverkov}, by using Algorithm~\ref{alg:ExtAverkov}, we have $ \bar{f} := \tilde{f} - \sigma \cdot g > 0 $ on $\rnum^2$, and is also a sums of squares, where
$$ \sigma = \left(\frac{10}{47}  \cdot g -  \frac{40}{47} \right)^{6}.$$  
Consequently, Algorithm~\ref{alg:overall} terminates with the certificate $f = \sum_{i=1}^4\sigma_i \cdot g_i + (\delta + \sigma) \cdot g + \bar{f}$. 
    
\end{example}

\section{Experiments}
\label{sec:exp}

This section reports the experimental results with our prototypical implementations \footnote{The implementation can be found using the following link: \url{https://github.com/typesAreSpaces/StrictlyPositiveCert}} in the Computer Algebra System \maple to compute certificates of strictly positive polynomial over bounded semialgebraic sets on a variety of problems. We implemented the extended Lassere's method discussed in Section~\ref{sec:lasserre} using the \texttt{Julia} programming language \footnote{The implementation is hosted in GitHub and can be accessed using the following link: \url{https://github.com/typesAreSpaces/globalStrictPositive}}. We also compare our results with \rc\cite{M18r}, a \maple package that computes certificates of strictly positive elements in quadratic modules using a semidefinite programming approach. All experiments were performed on an M1 MacBook Air with 8GB of RAM running macOS.

\subsection{Univariate certificate problems}

\subsubsection{Certificates of strictly positive polynomials}

The objective of this benchmark is to compare the quality of certificates and the time required by \rc \footnote{We use the command \texttt{multivsos\_interval(f, glist=generators, gmp=true)}.} and our implementation. This benchmark uses as generators the polynomial set of the form $\{p, -p\}$ where $p := \sum_{i=1}^{2n}{(x-a_i)}$ and computes certificates of strictly positive polynomials of the following kinds: a linear polynomial of the form $x - a$ (\texttt{left\_strict} in Table~\ref{tab:1}), a linear polynomial of the form $-(x-a)$ (\texttt{right\_strict}),  the product of the generators $-p^2 + 35$ (\texttt{lifted\_prods}), and the Archimedean polynomial added to the generators for \rc plus $10$ (\texttt{arch\_poly}). 

We produced 100 tests for each of these kinds of problems, and Table~\ref{tab:1} shows the results obtained. Each problem contains two rows: the first row uses \rc adding an Archimedean polynomial to the set of generators\footnote{The \rc package uses the algorithms in \citet{M18o} which require an explicit assumption about the Archimedeaness of the quadratic module; for this reason we extend the set of generators with an Archimedean polynomial as otherwise \rc does not find certificates.}; the second describes the results obtained by our implementation using the original generators for each benchmark problem. To compare the quality of the certificates obtained, we gather the degrees of the sum-of-squares multipliers obtained. We report these in the second column ``Max Poly Degrees'' collecting the maximum value of all the tests, the average and median. The fourth column ``Time'' reports the time in seconds for the average, median, and total of the 100 tests. Finally, the last column reports the number among 100 tests, for which certificates were computed.

\begin{table}[htpb]
    \centering
    \resizebox{\columnwidth}{!}
    {
    \begin{tabular}{c|c|ccc|ccc|c}
        \hline
        Problem & Implementation &
        \multicolumn{3}{c|}{Max Poly Degrees}&
        \multicolumn{3}{c|}{Time (seconds)}&
        Successful\\
        \cline{3-8}
        & & Max & Avg & Median & Avg & Median & Total & runs \\
        \hline
        \multirow{2}{*}{\texttt{left\_strict}}
            & \rc & 6 & 5.573 & 6 & 0.039 & 0.037 & 3.458 & 89/100 \\
        \cline{2-9}
            & Ours & 8 & 6.62 & 7 & 0.05 & 0.05 & 5.045 & 100/100 \\
        \hline
        \multirow{2}{*}{\texttt{right\_strict}}
            & \rc & 8 & 5.5 & 6 & 0.045 & 0.04 & 3.597 & 80/100 \\
        \cline{2-9}
            & Ours & 10 & 6.84 & 7 & 0.065 & 0.058 & 6.505 & 100/100 \\
        \hline
        \multirow{2}{*}{\texttt{lifted\_prods}}
            & \rc & 4 & 4.0 & 4 & 0.014 & 0.01 & 1.365 & 100/100 \\
        \cline{2-9}
            & Ours & 8 & 5.12 & 5 & 0.162 & 0.03 & 16.168 & 100/100 \\
        \hline
        \multirow{2}{*}{\texttt{arch\_poly}}
            & \rc & 8 & 5.236 & 6 & 0.045 & 0.039 & 2.459 & 55/100 \\
        \cline{2-9}
            & Ours & 10 & 6.74 & 7 & 0.088 & 0.073 & 8.772 & 100/100 \\
        \hline
    \end{tabular}
    }
    \caption{Computing certificates of four kinds of strictly positive univariate polynomials}
    \label{tab:1}
\end{table}

As the table shows, even though our implementation runs slower than \rc, it is able to find certificates for all the examples in the benchmark; in contrast, \rc cannot compute certificates for some of these examples. Additionally, our implementation can compute certificates using only the elements of the original set of generators given, whereas \rc requires a polynomial to witness the Archimedeaness of the quadratic module. The Archimedean polynomial is always a member of compact quadratic modules in the univariate case, and the resulting certificates by \rc would include an additional element in the set of generators for which certificates need to be computed explicitly if one wants certificates in the original generators.

\subsubsection{Comparison of degree of certificates}

For the second benchmark, we use a parameterized monogenic generator $(1-x^2)^{k}$ where $k$ is an odd number. It is well known in the literature \citet{S96c, V00p} that the degree of the certificates depends on the degree of the generators, as well as the separation of the zeros of the input polynomial and the endpoints of the semialgebraic set associated with the set of generators. We show with our experimental results with this kind of benchmark that our approach and heuristics can effectively handle this problem.

The benchmark evaluates the degree obtained by our implementation and compares it with \rc. 
The semialgebraic set for this generator is the interval $[-1, 1]$. We use different values of $\epsilon > 0$ and $k$ of the strictly positive polynomial $(1 + \epsilon) + x $ over $[-1, 1]$ to report the degrees of the certificates obtained in $\quadmod((1-x^2)^{k})$ and the time (in seconds) taken in Table~\ref{tab:2} below. Additionally, we provide complexity estimates for this benchmark from \citet{S96c}. We notice both implementations produce certificates within the theoretical bounds.

\begin{table}[ht]
    \centering
    \small
    \begin{tabular}{c|c|cc|cc|cc}
        \hline
        \multirow{2}{0.5cm}{\makecell{$\epsilon$}} &
        \multirow{2}{0.5cm}{\makecell{$k$}} &
        \multicolumn{2}{c|}{\rc} &
        \multicolumn{2}{c}{Ours} 
        & \multicolumn{2}{c}{Theoretical bounds}\\
        \cline{3-8}
        &   & Degree & Time & Degree & Time &
        Lowerbound & Upperbound \\
        \hline
        \multirow{5}{0.5cm}{\makecell{$\frac{1}{2}$}}
        &13 & 26 & 0.094 & 26 &  0.02 & 2 & 94\\
        &15 & 30 & 0.215 & 30 & 0.02 & 2 & 106\\
        &17 & 34 & 0.247& 34 & 0.021 & 2 & 118\\
        &19 & 38 & 0.312 & 38 & 0.022 & 2 & 130\\
        &21 & 42 & 0.361 & 42 & 0.023 & 2 & 142\\
        \hline

        \multirow{5}{0.5cm}{\makecell{$\frac{1}{3}$}}
        & 13 & 26 & 0.181 & 26 & 0.02 & 6 & 128\\
        & 15 & 30 & 0.182 & 30 & 0.02 & 6 & 146\\
        & 17 & 34 & 0.252 & 34 & 0.022 & 6 & 162\\
        & 19 & 38 & 0.319 & 38 & 0.024 & 6 & 180\\
        & 21 & 42 & 0.389 & 42 & 0.026 & 6 & 196\\
        \hline
    \end{tabular}
    \caption{Computing certificates of strictly positive univariate polynomials with separation of $\epsilon = \frac{1}{2}$ and $\frac{1}{3}$ from endpoints}
    \label{tab:2}
\end{table}

In this benchmark, \rc does not need an Archimedean polynomial to compute certificates. In general, our implementation is faster than \rc and obtains certificates with the same degree.

\subsection{Certificates of globally strictly positive multivariate polynomials}

This benchmark is used to test our implementation of the extended Lasse-rre's method discussed in Section~\ref{sec:lasserre}. The quadratic module in consideration is generated by the polynomial $g := 10 - x^2 - y^4 - z^8$. The quadratic module $\quadmod(g)$ is Archimedean as it contains the Archimedean polynomial 
$14 - x^2 - y^2 - z^2$; its certificates are
\begin{equation*} 
  \begin{split}
    &14 -x^2 - y^2 - z^2 =\frac{3854 x^2}{44027}+\frac{3854
      y^4}{44027}+y^2+\left(1-y^2\right)^2+\frac{80476924
      58107 z^8}{1282595968637408}\\
    &+\frac{273
      z^6}{26240}+2z^4+\frac{123}{160}
      \left(z-\frac{55 z^3}{82}\right)^2 
    +\frac{910375997}{5070233600} \left(z^2-\frac{591644335
      z^4}{910375997}\right)^2\\
    &+\frac{49514}{44027}\left(\frac{220135 z^4}{3168896}-\frac{12459641
      z^2}{15844480}+1\right)^2 +\left(1-z^4\right)^2+ \left(\frac{3854}{44027} + 1\right) \cdot g.
  \end{split}
\end{equation*}

For the input polynomials, we use two strictly positive polynomials by adding a positive constant to the Motkzin and Robinson polynomials. These polynomials are, respectively, of the form $x^6 + y^4z^2 + y^2z^4 - 3x^2y^2z^2 + \frac{1}{d}$ and $x^4y^2 + y^4z^2 + x^2z^4 - 3x^2y^2z^2 + \frac{1}{d}$ where $d$ is a positive constant. We encoded the problem in \rc and used several values for its configuration parameter \texttt{relaxorder}; however, the package cannot find certificates for this benchmark.

\begin{figure}[ht!]
  \centering
  \includegraphics[width=\columnwidth]{./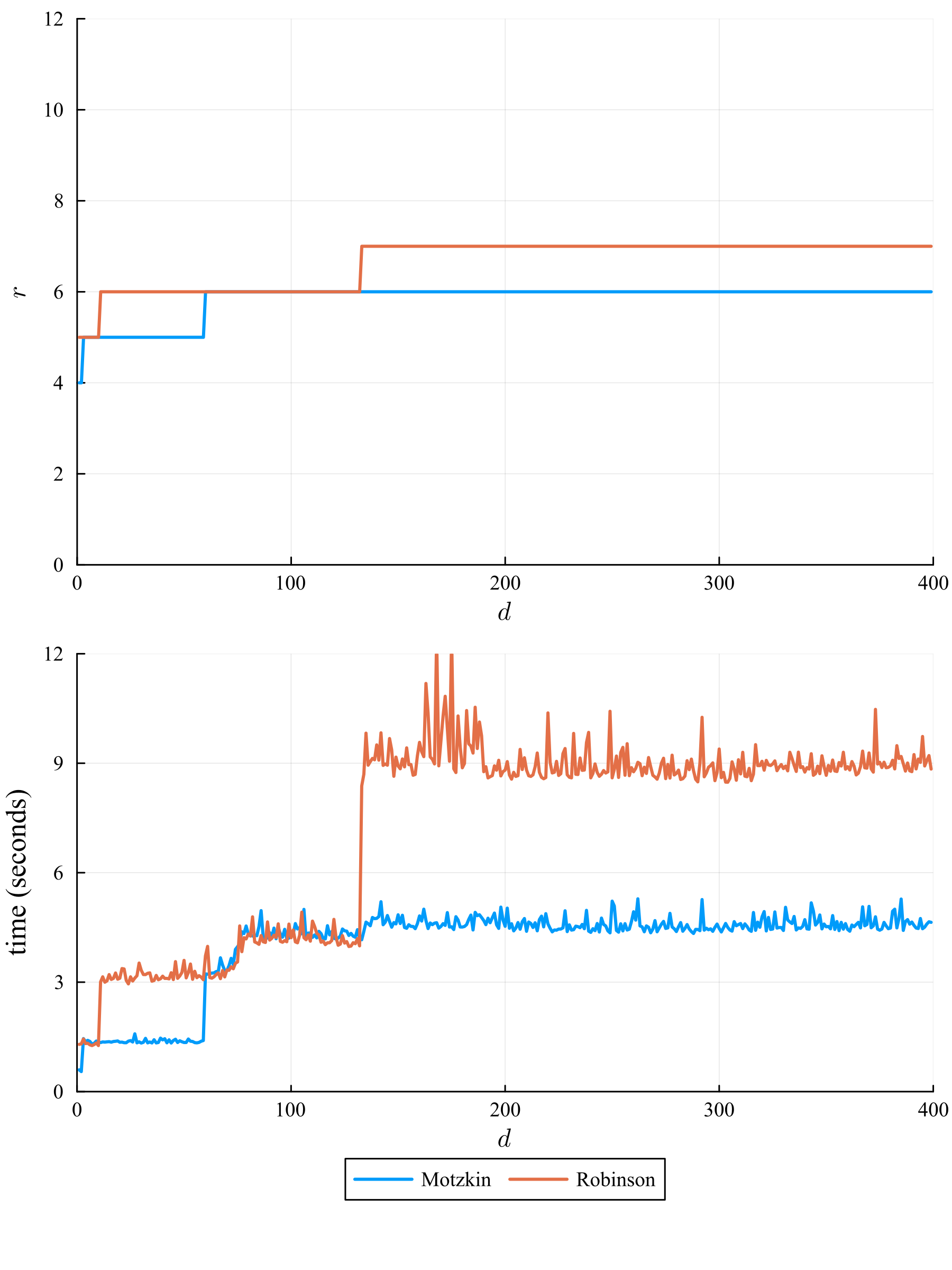}\vspace{-8mm}
  \caption{Performances of implementation of Algorithm~\ref{alg:Lasserre} for lifted Motkzin and Robinson polynomials by $\frac{1}{d}$.} 
  \label{fig:lasserre}
\end{figure}

Figure~\ref{fig:lasserre} reports the parameter $r$ and the time (in seconds) used by the implementation. The parameter $r$ is the minimal degree for $f_{\epsilon}$ in \eqref{eq:globalCeri} to become a sum of squares. The reader should note that as $d$ (horizontal axis) increases, the values of $r$ and time also increase.

\section{Future work}

We conclude this paper by listing several potential research problems to investigate: (1) degree estimation for the certificates constructed by our method; a consequence of this line of research could be new interesting results for effective Putinar's Positivstellensatz; (2) extend the proposed method to deal with polynomials which are non-negative over $\semialgebraic(G)$; (3) complexity analysis of Algorithm~\ref{alg:overall} possibly leading to optimization of our implementation; (4) identify sub cases and conditions to obtain lower degree certificates.

\section{Acknowlegements}

The research of Weifeng Shang and Chenqi Mou was supported in part by National Natural Science Foundation of China (NSFC 12471477). Jose Abel Castellanos Joo and Deepak Kapur were supported in part by U.S. National Science Foundation awards (CCF-1908804) and (CCF-2513374).

\bibliographystyle{elsarticle-harv} 

\bibliography{references.bib}

\end{document}